\definecolor{boubelcolor}{rgb}{.65,0.05,0}
\theoremstyle{plain}
\newtheorem{them}{Theorem}[section]
\newtheorem*{mthem}{Main Theorem}
\newtheorem{pro}[them]{Proposition}
\newtheorem{cor}[them]{Corollary}
\newtheorem{lem}[them]{Lemma}
\theoremstyle{definition}
\newtheorem{defi}[them]{Definition}
\theoremstyle{remark}
\newtheorem{rem}[them]{Remark}
\newtheorem{ex}[them]{Example}
\newtheorem*{notation}{Notation}
\newcommand{\X}{\mathcal{X}}
\newcommand{\R}{\mathbf{R}}
\newcommand{\N}{\mathbf{N}}
\newcommand{\id}{\mathrm{id}}
\newcommand{\ma}{\mathrm{Marg}}
\newcommand{\p}{\mathcal{P}}
\renewcommand{\d}{\mathrm{d}}
\newcommand{\spt}{\mathrm{Spt}}
\newcommand{\graph}{\mathrm{Graph}}
\newcommand{\card}{\mathrm{card}}
\newcommand{\pr}{\mathrm{proj}}
\newcommand{\eps}{\varepsilon}
\title{On a solution to the Monge transport problem on the real line arising from the strictly concave case}
\author{Nicolas Juillet}
\begin{document}
\maketitle
\begin{abstract}
It is well-known that the optimal transport problem on the real line for the classical distance cost may not have a unique solution. In this paper we recover uniqueness by considering the transport problems where the costs are a power smaller than one of the distance, and letting this parameter tend to one. A complete construction of this solution that we call excursion coupling is given. This is reminiscent to the one in the convex case. It is also characterized as the solution of secondary transport problems. Moreover, a combinatoric/geometric characterization of the routes used for this transport plan is provided.
\end{abstract}

We first introduce the mass transport problem and quickly arrive to our Main Theorem. The reason why it can appear so fast is that it concerns the most basic setting -- together with the finite discrete setting -- where the optimal transport problem can be introduced, namely the real line for a power cost.

\begin{defi}
Let $(\X,\d)$ be a metric space and $p$ be a positive real number. We denote by $\p_p(\X)$ the set of Borel probability measures $\mu\in \p(\X)$ such that $\int \d(x_0,x)^p\d\mu(x)<+\infty$ for some (and in fact any) $x_0\in \X$ and say that $\mu$ has finite moment of order $p$. The set $\ma(\mu,\nu)$ is the (convex) set of measures $\pi\in \p(\X\times \X)$ with first marginal $\mu$ and second marginal $\nu$, i.e $(\pr^1)_\#\pi=\mu$ and $(\pr^2)_\#\pi=\nu$ where $\pr^i$ is the $i$-th coordinate function. These definitions naturally extend to positive measures $\pi,\,\mu,\,\nu$ of finite positive mass $\mu(\X)$.

For $\mu$ and $\nu$ two measures with $\mu(\X)=\nu(\X)$ we call ``$L^p$ transport problem'' (of Monge and Kantorovich), the problem to minimize
\[T_p:\pi\in \ma(\mu,\nu)\mapsto \iint \d(x,y)^p\,\d\pi(x,y)\in \R\cup \{\infty\}.\]%i\in \ma(\mu,\nu)}
We denote by $\ma_p^*(\mu,\nu)$ the (convex) set of solutions to this problem. 
\end{defi}

The space $\ma(\mu,\nu)$ is equipped with the weak convergence topology and it is compact according to Prokhorov Theorem, see \cite[\S1.1.7]{Vi1}. Moreover $T_p$ is continuous on $\ma(\mu,\nu)$% as proved in Lemma \ref{lem:continuous}
, so that $\ma_p^*(\mu,\nu)$ is not empty. One may have a look at page \pageref{lem:continuous} where this basic continuity result and a more evolved one are stated and proved. Note that if $\mu$ and $\nu$ have finite moment of order $p$ the value of the problem is finite, with $\min_{\pi\in \ma(\mu,\nu)}T_p(\pi)\leq T_p(\mu\otimes \nu)<+\infty$. Hence, we will assume that $\mu$ and $\nu$ have finite moment of order $1$ permitting a finite value for all the $L^p$ transport problems, for $p\in ]0, 1]$. Note moreover that for every $p\in ]0,1]$ since $T_p$ is continuous and $\ma(\mu,\nu)$ compact, the set $\ma_p^*(\mu,\nu)$ is not empty.

%\sout{Note that for $0<p\leq q$ we have $\min_{\pi}T_p(\pi)^{1/p}\leq C_{p,q}(\mu(\X)) \min_{\pi}T_p(\pi)^{1/q}$ where $C_{p,q}$ is a function of the mass of the problem that has value $1$ if $\mu(\X)=1$. Moreover}

Let us review some of the known facts concerning the solutions of the $L^p$ transport problem on the real line $\X=\R$. See also Remark \ref{rem:geod} for the connexion of the $L^1$ transport problem on $\R$ with the $L^1$ transport problem on geodesic spaces.

\begin{itemize}
\item For $p>1$, except from degeneracy occurring when $\min_{\pi\in \ma(\mu,\nu)}T_p=\infty$ the set of solutions $\ma_p^*(\mu,\nu)$ is reduced to a single element, known as \emph{commonotonic coupling}, \emph{quantile coupling}, \emph{monotone rearrangement}, \emph{Hoeffding-Fr\'echet transport} or any combination of these vocables. The universality and usefulness of the notion is reflected by this number of names. 
\item The value $p=1$ is the one in the original problem of Monge of 1781 \cite{Monge} (except that Monge considers $\X=\R^2$ or $\R^3$). For this parameter the quantile transport is still one of the solutions but it may not be the unique one. There is actually a broad class of pairs $(\mu,\nu)$ such that the set of solutions $\ma_1^*(\mu,\nu)$ is the whole set of transport plans $\ma(\mu,\nu)$, see Remark \ref{rem:DiML}. Note also that the solutions of the Monge problem in many geodesic space $\X$ can be disintegrated along transport rays, i.e parts of $\X$ isometric to intervals where the obtained measures are solution of the Monge problem on the real line. This disintegration is the fundament for many powerful recent applications, see Remark \ref{rem:geod}
\item The problem for the values $p\in ]0,1[$ may be less known and understood as $p\geq 1$. However, Gangbo-McCann \cite{GMcC} and McCann \cite{McC} thoroughly explored this range of the parameter $p$. For a class of absolutely continuous measures $\mu$ and $\nu$, McCann found an algorithm to restrict the search for the solution -- it is unique under the absolute continuity assumption -- to a finite number of classes, where regions of $\R$ concerning $\mu$ are mapped onto other regions of $\R$ concerning $\nu$, the frontiers between the regions having to be determined. Note that for general values of $(\mu,\nu)$, the solution space $\ma_p^*(\mu,\nu)$ may not be reduced to a single value. Moreover it is not constant as a function of $p\in ]0,1[$. See Example \ref{ex:main} about these two facts. See also Lemma \ref{lem:coincide} the maximal amount of mass that can stay must also stay on place.% $\mu\wedge \nu$ that can stay also has to stay.
\item Our paper being concerned with the value $p=1$ and its limits $p\to  1^\pm$, we interrupt here the description of the power costs. It is continued in Remarks \ref{rem:zero} and \ref{rem:coulomb} where we give an account on $p=0$ and, beyond, $p<0$.
\end{itemize}
%We collected now enough information to state the main theorem. 

The novelty of this paper is that for the critical and historical parameter $p=1$ we distinguish a special element of $\ma_p^*(\mu,\nu)$ that we call the \emph{excursion coupling}. This element may be seen as a solution for $p\to1^-$, the counterpart of the quantile coupling that would be the solution for $p\to1^+$.

\begin{mthem}\label{main}
Let $\mu$ and $\nu$ be two probability mesures in $\p_1(\R)$, $\pi\in\ma(\mu,\nu)$ and $q\in]0,1[$. The following assumptions are equivalent.
\begin{enumerate}
\item\label{un} (Solution of the $L^{1^-}$ limit transport problem) There exists a sequence $(\pi_n,p_n)_n\in \N$ with $p_n<1$ and $\pi_n\in\ma^*_{p_n}(\mu,\nu)$ for every $n$, such that $(\pi_n,p_n)\to_n (\pi,1)$.
\item\label{deux} (Solution of the $L^{1,q}$ secondary transport problem) $\pi$ is in $\ma^*_1(\mu,\nu)$ and in this space it minimizes $\gamma\in \ma^*_1(\mu,\nu)\mapsto \iint |y-x|^{q}\,\d\pi(x,y)$.
\item\label{trois} (Monotonicity) $\pi$ is concentrated on a set $S\subset \R\times \R$ whose arches do not cross, do not connect and have the same orientation when they are nested (see Definition \ref{def:arches} and Figure \ref{fig:forbidden}).
\item\label{quatre} (Excursion coupling) $\pi$ is the excursion coupling of $\mu$ and $\nu$ as defined in Section \ref{sec:defexc}.
\end{enumerate}
\end{mthem}

In Section \ref{sec:proof} we will prove the Main Theorem as well as the following corollary based on the uniqueness of a coupling satisfying \emph{\ref{un}} or \emph{\ref{deux}} in the Main Theorem.

\begin{cor}\label{cor:main_cor}
%Note that the theorem contains the uniqueness of a coupling satisfying \emph{\ref{un}} or \emph{\ref{deux}}. Due to this uniqueness the excursion coupling $\pi$ in \emph{\ref{quatre}} also satisfies two a priori stronger statements that are in fact equivalent to \emph{\ref{un}}--\emph{\ref{quatre}}.
Let $\mu$ and $\nu$ be two probability mesures in $\p_1(\R)$.  The excursion coupling $\pi\in\ma(\mu,\nu)$ satisfies the two following statements.
\begin{itemize}
\item[1'.] For $(\pi_n,p_n)_{n\in \N}$ satisfying $p_n\to 1$ (with $p_n<1$) and $\pi_n\in\ma^*_{p_n}(\mu,\nu)$ it holds $\pi_n\to \pi$.
\item[2'.] For every $q'<1$, the map $\gamma\in \ma^*_1(\mu,\nu)\mapsto \iint |y-x|^{q'}\,\d\pi(x,y)$ is minimized by $\pi$.
\end{itemize}
%
%The equivalence with \emph{1'} relies on the fact that due to Prokhorov Theorem any subsequence of $(\pi_n,p_n)_{n\in \N}$ has converging subsequences. Due to Theorem \ref{main} the obtained limits all must be the excursion coupling. 
\end{cor}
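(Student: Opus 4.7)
The plan is to derive both items directly from the Main Theorem, exploiting the fact that its four equivalent conditions (1)--(4) single out one and the same coupling, namely the excursion coupling $\pi$ constructed in Section \ref{sec:defexc}. Once this uniqueness is recognised, the corollary becomes a matter of pointing to the correct implication of the Main Theorem and, for 1', adding a routine compactness argument.

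For item 2', fix an arbitrary $q'\in\,]0,1[$. The excursion coupling $\pi$ satisfies characterisation (4), so the implication $(4)\Rightarrow(2)$ applied with $q=q'$ yields that $\pi$ minimises $\gamma \mapsto \iint |y-x|^{q'}\,\d\gamma(x,y)$ on $\ma^*_1(\mu,\nu)$. Since $q'$ was arbitrary, 2' follows. This is genuinely stronger than the verbatim phrasing of (2), in which a single $q$ is fixed at the outset; the gain comes from the fact that characterisation (4) does not involve $q$ at all.

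For item 1', the strategy is subsequence extraction. By Prokhorov, $\ma(\mu,\nu)$ is compact and metrisable in the weak topology, so the given sequence $(\pi_n)$ admits weakly convergent subsequences. Along any such subsequence $\pi_{n_k}\to \tilde\pi$, the pair $(\pi_{n_k},p_{n_k})\to(\tilde\pi,1)$ supplies precisely the witness demanded by condition (1) of the Main Theorem for $\tilde\pi$. The implication $(1)\Rightarrow(4)$ then forces $\tilde\pi=\pi$, so every subsequential limit of $(\pi_n)$ equals $\pi$ and the whole sequence must converge to $\pi$.

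No deep obstacle is anticipated: the essential work sits inside the Main Theorem, and the corollary merely reformulates it as a uniqueness result for couplings of type (1). The one point to watch is that condition (1) only requires the \emph{existence} of a witnessing sequence converging to $\pi$, which is exactly what permits any convergent subsequence of an arbitrary $(\pi_n,p_n)$ to serve as its own witness, thereby upgrading subsequential convergence to full convergence.
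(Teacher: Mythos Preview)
Your proposal is correct and follows essentially the same route as the paper's proof: for 2' you apply $(4)\Rightarrow(2)$ with $q=q'$, and for 1' you use compactness of $\ma(\mu,\nu)$ to extract convergent subsequences, identify each limit via $(1)\Rightarrow(4)$ as the excursion coupling, and conclude convergence of the full sequence. Your write-up is in fact clearer than the paper's terse version, which somewhat unnecessarily singles out increasing subsequences of the $p_n$.
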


The paper is organized as follow: To give the Main Theorem a complete meaning we first briefly make \emph{\ref{trois}.}\ more precise in Definition \ref{def:arches} and define in Section \ref{sec:defexc} the excursion coupling attached to a pair $(\mu,\nu)$. Note that this definition relies on several facts concerning functions with bounded variations that will be recalled and established in \S\ref{ssec:tv}. Then we prove step by step the following implications: We prove $\emph{\ref{un}}\Rightarrow \emph{\ref{trois}}$ and $\emph{\ref{deux}}\Rightarrow \emph{\ref{trois}}$ in Section \ref{sec:1to3}. We prove $\emph{\ref{trois}}\Rightarrow \emph{\ref{quatre}}$ in Section \ref{sec:3to4}. The fact that there exists at least one solution to the $L^{1^-}$ problem and the $L^{1,q}$ secondary problem completes the proof (see Section \ref{sec:proof}). In Section \ref{sec:remarks} we provide some more comments.

\begin{defi}[Arches in the Main Theorem]\label{def:arches}
Let $S$ be a subset $\R\times \R$, whose elements we call \emph{transport routes}. In what follows $[a,b]$ means $[\min(a,b),\max(a,b)]$ and the routes are seen as arches (half-circles) over the real line.

\begin{itemize}
\item The routes of $S$ are said \emph{non-crossing arches} if for every $(x,y)\in S$ and $(x',y')\in S$ at least one of the three happens.\begin{itemize}
\item $[x,y]\cap [x',y']=\emptyset$
\item $[x,y]\cap [x',y']=\{z\}$ for some $z\in \R$% (in this case see ``do not connect'' below about the orientation of the arches.)
\item One of the two, $[x,y]$ or $[x',y']$, is included in the other.
%\item $[x,y]\cap [x',y']=\{x\}=\{x'\}$ or $[x,y]\cap [x',y']=\{y\}=\{y'\}$.
\end{itemize}
\item The arches \emph{do not connect} means that if $(x,y)\in S$, $(x',y')\in S$ and $\min(|y-x|,|y'-x'|)>0$ then $y\neq x'$. 
\item The \emph{nested arches have the same orientation} if for every $(x,y)\in S$ and $(x',y')\in S$ such that $[x',y']\subset ]x,y[$ we have $(y-x)(y'-x')\geq 0$.
\end{itemize}
\end{defi}

%\begin{defi}\label{def:mono}
%We consider a measure $\pi\in \p(\R^2)$ and a set $S\subset \R^2$ with $\pi(S)=1$ as for instance the support of $\pi$. Such a set $S$ can have the following interdiction properties:
%\begin{enumerate}
%\item  For the elements $(x,y,x',y')\in S^2$ the (strict) orders $x<x'<y<y'$ and $y<y'<x<x'$ (and $x'<x<y'<y$  and $y'<y<x'<x$) are forbidden.
%\item For every $(x,y,x',y')\in S^2$  the (strict) orders $x<y'<y<x'$ and $y<x'<x<y'$ (and $x'<y<y'<x$  and $x<y'<y<x'$) are forbidden.
%\item For the elements $(x,y,x',y')\in S^2$ the (strict) orders $x<y'<x'<y$ and $y<x'<y'<x$ (and $x'<y<x<y'$ and $y'<x<y<x'$ ) are forbidden.
%\end{enumerate}
%If there exists $S$ satisfying the three properties we say that $\pi$ is monotone.
%\end{defi}
The first property can be summed up saying that in the plane the two circles of diameter $|y-x|$ and $|y'-x'|$ linking $x$ and $y$, and $x'$ and $y'$, respectively, do not cross. The second property states that a point can not be at the same time a starting and arriving point. In the last property is stated that transporting $x$ to $y$, and $x'$ to $y'$ must be done in the same direction when one of the arches is included in the other. See Figure \ref{fig:forbidden} for a representation of the forbidden configuration and the authorized rerouting -- where $(x,y')$ and $(x',y)$ are the new routes.

\noindent

% £Samuel Borza,§
\emph{Aknowledgements.} I warmly thank Guillaume Carlier, Augusto Gerolin and Martin Huesmann for discussions, bibliographic or editorial suggestions.% or for discussions on examples related to this work. We warmly thank Fr ́ed ́eric Chapoton and Gw ́ena ̈el Massuyeau for the suggestions leading us to Proposition 6.6. We are also grateful to Artem Kozhevnikov, Dario Prandi, Luca Rizzi and An- drei Agrachev for many stimulating discussions. This work has been

\section{Definition of the excursion coupling}\label{sec:defexc}

%\begin{figure}
%\begin{center}
%   \def\svgwidth{5cm}
%   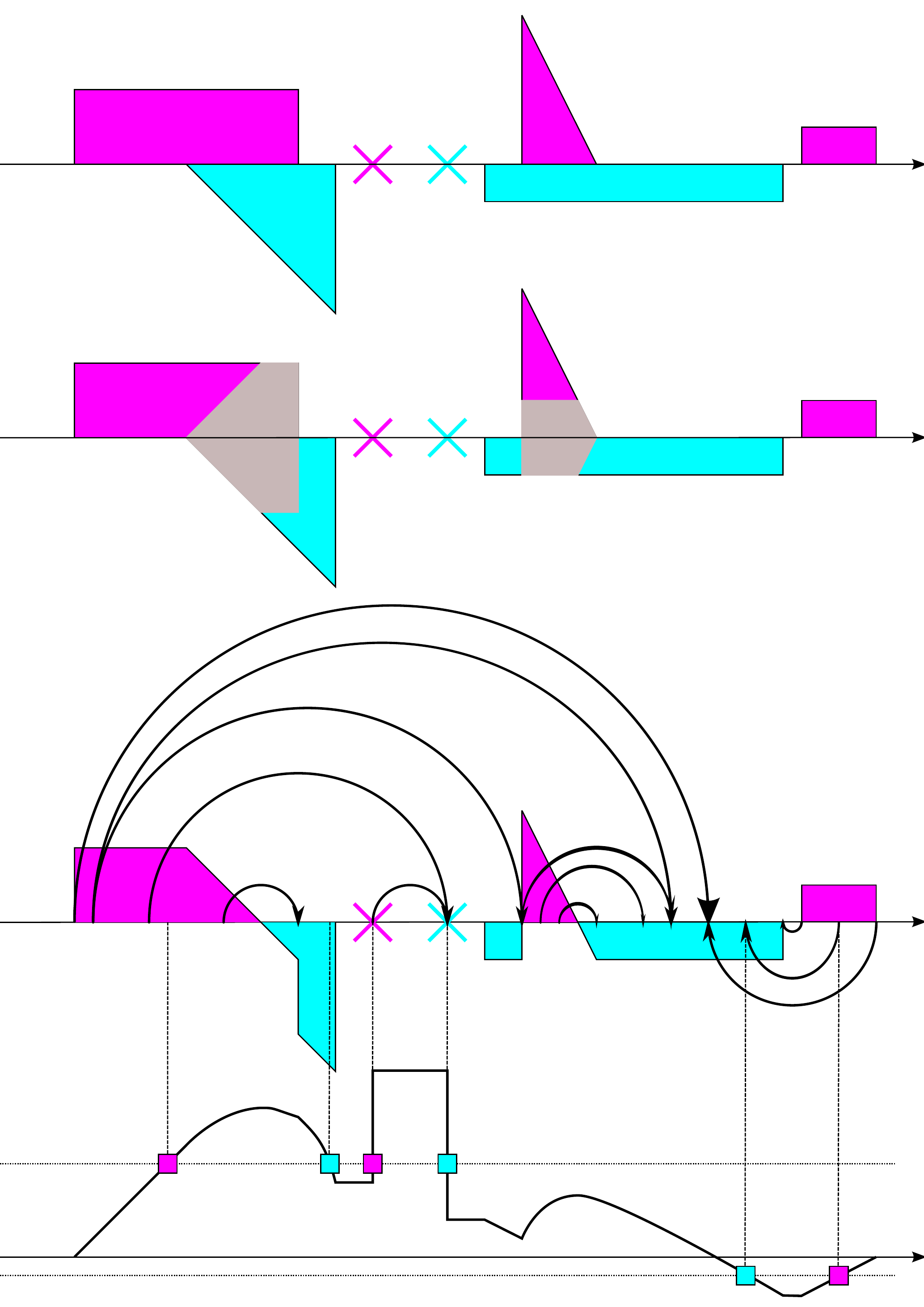
%\caption{The quantile transport is based on simultaneous simulation through the cumulative distribution functions (see bellow). The transport routes are materialized by arrows that not allowed to cross each other (see above).}
%\end{center}
%\end{figure}

Given two probability measures $\mu$ and $\nu$, we consider the signed measure $\sigma=\mu-\nu$ and its cumulative distribution function
\begin{align}
F_\sigma:x\in\R\mapsto \sigma]-\infty,x]=F_\mu(x)-F_\nu(x).
\end{align}
Since it is the difference of $F_\mu$ and $F_\nu$, $F_\sigma$ is c\`adl\`ag (right continuous and with left limits at any point) and has limits $0$ in $\pm\infty$. The graph $\graph(F_\sigma)=\{(x,F_\sigma(x))\in \R^2:\,x\in\R)\}$ of $F_\sigma$ can be completed at discontinuity points $x$ by vertical segments $[(x,F_\sigma(x^-)),(x,F_\sigma(x))]\subset \R^2$ where $F_\sigma(x^-)=\lim\limits_{\eps\to 0,\, \eps>0}F_\sigma(x-\eps)$ denotes the left limit at $x$. We denote by $F^*_\sigma$ the multivalued map defined by $F^*_\sigma(x)=[F_\sigma(x^-),F_\sigma(x)]$ at discontinuity points $x$ and $F^*_\sigma(x)=\{F_\sigma(x)\}$ at continuity points. Its graph is 
%\[\graph^*(F_\sigma)=\graph(F_\sigma)\cup\bigcup_{x\in \R} [(x,F_\sigma(x^-)),(x,F_\sigma(x))]\]
\[\graph(F^*_\sigma)=\{(x,h)\in \R^2:\,h\in F^*_\sigma(x)\}\]
and we may also denote it by $\graph^*(F_\sigma)$. If $(x,h)\in \graph(F^*_\sigma)$ the real $x$ is called a generalized solution of $F_\sigma=h$.

We further introduce the following subsets of $\graph^*(F_\sigma)$:
$\graph^{*,+}(F_\sigma)$ is the set of increasing points $(x,h)$, i.e such that in a neighborhood $U_x$ of $x$ any point $(x',h')\in \graph^*(F_\sigma)$ with $x'\in U_x\setminus \{x\}$ satisfies $(h'-h)(x'-x)>0$.
$\graph^{*,-}(F_\sigma)$ is the set of decreasing points $(x,y)$, i.e such that in a neighborhood $U_x$ of $x$ any point $(x',h')\in \graph^*(F_\sigma)$ with $x'\in U_x\setminus \{x\}$ satisfies $(h'-h)(x'-x)<0$.

%***********************************************************
%
%$\graph^{*,0}(F_\sigma)$ is the set of points $(x,h)$ such that in a neighborhood $U_x$ of $x$ one of the two $h'-h\geq 0$ or $h'-h\leq 0$ hold for any point $(x',h')\in \graph^*(F_\sigma)$ with $x'\in U_x\setminus \{x\}$.
%
%************************************************************
\begin{them}[Excursion couplings can be defined]\label{thm:construction}
Let $\mu$ and $\nu$ be probability measures and $\sigma=\mu-\nu$, $F_\sigma$ and $\graph(F^*_\sigma)$ be defined as above. One can define a transport of $\ma(\mu,\nu)$ as described in what follows, the results implicitly stated during this construction (see Remark \ref{rem:implicit} for a list) are correct and we call \emph{excursion coupling} the resulting coupling.

We distinguish two cases (the first one is a special case of the second).
\begin{enumerate}
\item Assume that $\mu$ and $\nu$ are singular measures ($\mu\perp \nu$). We define a coupling $(X,Y)$ where $X\sim \mu$ and $Y\sim \nu$. Let $\theta$ be the measure with density $h\mapsto i^\pm(h)=\card[(\R\times\{h\})\cap \graph^{*,+}(F_\sigma)]+\card[(\R\times\{h\})\cap \graph^{*,-}(F_\sigma)]$. Let $H$ be a random variable with law $\theta/2$ and note
$(\R\times\{H\})\cap \graph^{*}(F_\sigma)= \{x_1,\ldots,x_N\}\times \{H\}$ where $N=i^\pm(H)$ and $x_1<\cdots< x_N$. Conditionally on $H$ and $H>0$, the random vector $(X,Y)$ is defined to be uniform on
$\{(x_1,x_2),(x_3,x_4),\ldots,(x_{N-1},x_N)\}$. Conditionally on $H$ and $H<0$ it is uniform on $\{(x_2,x_1),(x_4,x_3),\ldots,(x_{N},x_{N-1})\}$.

\item If $\mu=\eta+\mu_0$ and $\nu=\eta+\nu_0$ with $\mu_0\perp \nu_0$, with probability $\eta(\R)$ the random vector $(X,Y)$ satisfies $X=Y$ and $X\sim \eta$. On the complementary event, with probability $1-\eta(\R)=\mu_0(\R)$ it is distributed as the  coupling of the singular measures $\mu_0(\R)^{-1}\mu_0$ and $\nu_0(\R)^{-1}\nu_0$ defined in the first item.
\end{enumerate}
\end{them}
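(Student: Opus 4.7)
The plan is to verify the implicit claims by working with $F_\sigma$ as a càdlàg function of bounded variation. Since $F_\sigma=F_\mu-F_\nu$ is a difference of two bounded monotone càdlàg functions with limits $0$ at $\pm\infty$, its total variation equals $\mu(\R)+\nu(\R)=2$ in the singular case $\mu\perp\nu$, and its Jordan decomposition recovers the positive and negative variation measures as $V^+=\mu$ and $V^-=\nu$ (this being the Jordan decomposition of the signed measure $\sigma=\mu-\nu$).

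The main technical tool will be the Banach indicatrix / co-area formula for càdlàg BV functions: for every Borel $A\subset\R$,
\begin{align*}
V^+(A)=\int_\R\card\bigl\{x\in A:(x,h)\in\graph^{*,+}(F_\sigma)\bigr\}\,\d h,
\end{align*}
and the analogous identity for $V^-$ with $\graph^{*,-}$. These two equalities simultaneously yield (i) Lebesgue-measurability of the density $i^\pm$; (ii) $\theta(\R)=V^+(\R)+V^-(\R)=2$, so $\theta/2$ is a probability measure; and (iii) $i^\pm(h)<\infty$ for Lebesgue-a.e.\ $h$. Moreover, because $F_\sigma$ vanishes at $\pm\infty$, any finite horizontal slice $(\R\times\{h\})\cap\graph^*(F_\sigma)$ with $h\neq 0$ has an even number of elements, alternating between $\graph^{*,+}$ and $\graph^{*,-}$ and beginning with an element of $\graph^{*,+}$ if $h>0$ and of $\graph^{*,-}$ if $h<0$. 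The ordered enumeration $(x_1,\ldots,x_N)$ then depends measurably on $h$ as the sorted projection of a Borel slice of a Borel set, and the construction precisely assigns $X$ to the increasing crossings and $Y$ to the decreasing ones, in both signs of $h$.

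To verify $X\sim\mu$ in the singular case, I would disintegrate along $H$. For any Borel $A\subset\R$ and any $h\neq 0$ with $i^\pm(h)<\infty$, the conditional law gives
\begin{align*}
\Pr(X\in A\mid H=h)=\frac{\card\{x\in A:(x,h)\in\graph^{*,+}(F_\sigma)\}}{i^\pm(h)/2},
\end{align*}
so integrating against $\theta/2$ cancels the factor $i^\pm(h)/2$ and yields $\Pr(X\in A)=\int_\R\card\{x\in A:(x,h)\in\graph^{*,+}(F_\sigma)\}\,\d h=V^+(A)=\mu(A)$ by the indicatrix formula; the symmetric computation gives $Y\sim\nu$. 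The general case follows immediately from this: writing $\mu=\eta+\mu_0$, $\nu=\eta+\nu_0$ with $\mu_0\perp\nu_0$, the diagonal piece contributes mass $\eta(\R)$ with both marginals equal to $\eta$, and the singular piece applied to the normalisations of $\mu_0,\nu_0$ contributes marginals $\mu_0,\nu_0$ by the first case, so the total marginals are $\mu$ and $\nu$.

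The hardest step will be establishing the Banach indicatrix formula in this càdlàg BV setting with the refinement splitting the counting function into $\graph^{*,+}$ and $\graph^{*,-}$: the classical formulation concerns continuous BV functions, and here one must carefully account for jumps of $F_\sigma$ by means of the vertical segments completing $\graph^*$, ensuring that intermediate heights inside a positive jump are counted in $\graph^{*,+}$ and contribute exactly the jump size to $V^+$ (and symmetrically for negative jumps). Once this refinement is in hand, the alternation structure and the cancellation above make the marginal identification essentially automatic.
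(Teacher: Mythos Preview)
Your proposal is correct and follows essentially the same route as the paper: the paper also reduces everything to a Banach indicatrix formula for c\`adl\`ag BV functions (citing Bertoin--Yor for \eqref{eq:*}, \eqref{eq:*bis} and \eqref{eq:**}), identifies the positive and negative variation of $F_\sigma$ with $\mu$ and $\nu$ via $\mu\perp\nu$, proves the alternation structure in Proposition~\ref{pro:altern}, and then reads off the marginals exactly as you do. The only point where you should tighten the wording is the alternation claim: it holds for Lebesgue-a.e.\ $h$ (those levels where every generalized intersection is a genuine crossing, i.e.\ $i^*(h)=i^{*,\pm}(h)$), not for every $h$ with a finite slice.
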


\begin{rem}\label{rem:implicit}
To make Theorem \ref{thm:construction} a rigorous definition of the excursion coupling we will have to prove that $\theta/2(\d h)$ is a probability density, that $N$ is almost surely both finite and even with $(\R\times\{h\})\cap \graph^{*}(F_\sigma)=(\R\times\{h\})\cap (\graph^{*,-}(F_\sigma)\cup\graph^{*,+}(F_\sigma))$. Moreover, we must prove that the laws of $X$ and $Y$ are $\mu$ and $\nu$, respectively.
\end{rem}

\section{Monotonicity of the solutions of the $L^{1,q}$ and $L^{1^-}$ transport problems}\label{sec:1to3}

In this section we prove the implications $\emph{\ref{un}}\Rightarrow \emph{\ref{trois}}$ and $\emph{\ref{deux}}\Rightarrow \emph{\ref{trois}}$ of the Main Theorem. The name ``(cyclical-)monotonicity'' in the title is a generic name in Optimal Transport that in this section is represented by property $\emph{\ref{trois}}$. (Cyclical-)monotonicity results are variations of the following simple swapping lemma that concerns the $L^p$ transport problem for cycles of length two. For the $L^{1^-}$ transport problem we will firstly interpret Lemma \ref{lem:swap1} for $p<1$ and secondly let $p$ go to $1$. For the $L^{1,q}$ transport problem will need a result analogue to \ref{lem:swap1} but more specific result: It will be Lemma \ref{lem:swap2} on page \pageref{lem:swap2}. 

%Let us start with an account on $\X=\R$ (see also Villani 1). The problem can be decomposed in three cases: the convex case $p>1$, $p=1$ or the concave case $p\in ]0,1[$ (hereafter we may simply write $p<1$). For presenting all of them, we will use the following lemma.

\begin{lem}[Swapping lemma]\label{lem:swap1}
Let $p$ be positive and $\pi$ be in $\ma_p^*(\mu,\nu)$. Assume moreover $T_p(\pi)<+\infty$. Consider a set $S\subset \R^2$ such that $\pi(S)=1$ and for $(x,y)\in S$ any neighborhood of $(x,y)$ has positive measure (Notice that the support of $\pi$ satisfies these conditions, so that we may choose $S=\spt(\pi)$). For any $(x,y)$ and $(x',y')$ in $S$, the following holds:
\begin{align}\label{eq:swap}
|y-x|^p+|y'-x'|^p\leq |y-x'|^p+|y'-x|^p.
\end{align}
\end{lem}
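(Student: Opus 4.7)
The plan is to argue by contradiction through the standard variational competitor construction of optimal transport. Assume there exist $(x_0,y_0),(x_0',y_0')\in S$ for which \eqref{eq:swap} fails strictly:
$$|y_0-x_0|^p+|y_0'-x_0'|^p>|y_0-x_0'|^p+|y_0'-x_0|^p.$$
I would first observe that necessarily $x_0\neq x_0'$ and $y_0\neq y_0'$, since otherwise the two sides coincide. By continuity of the cost $c(x,y)=|y-x|^p$ on $\R^2$ (which uses $p>0$), there exist $\delta>0$ and disjoint open product neighborhoods $U=U_1\times V_1\ni(x_0,y_0)$ and $U'=U_2\times V_2\ni(x_0',y_0')$, with $U_1\cap U_2=\emptyset=V_1\cap V_2$, such that uniformly on $U\times U'$,
$$c(x,y)+c(x',y')>c(x,y')+c(x',y)+\delta.$$
By the hypothesis on $S$, both $\pi(U)>0$ and $\pi(U')>0$.

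Next I would build a competitor $\tilde\pi\in \ma(\mu,\nu)$ by swapping a small amount of mass between the two blocks. Writing $\alpha=\pi|_U$, $\beta=\pi|_{U'}$ and $\hat\alpha=\alpha/\pi(U)$, $\hat\beta=\beta/\pi(U')$ for the induced probabilities with marginals $\hat\alpha_1,\hat\alpha_2,\hat\beta_1,\hat\beta_2$, for $m\in (0,\min(\pi(U),\pi(U'))]$ I set
$$\tilde\pi=\pi-m\hat\alpha-m\hat\beta+m(\hat\alpha_1\otimes\hat\beta_2)+m(\hat\beta_1\otimes\hat\alpha_2).$$
I would verify that $\tilde\pi\in\ma(\mu,\nu)$: positivity follows from $m\hat\alpha\leq\pi$ and $m\hat\beta\leq\pi$ together with the fact that $\alpha$ and $\beta$ are supported on the disjoint sets $U$ and $U'$, so the two subtractions do not interfere; the marginal identities reduce to the observation that $m\hat\alpha_1+m\hat\beta_1$ is both subtracted (as the first marginal of $m\hat\alpha+m\hat\beta$) and added (as the first marginal of $m(\hat\alpha_1\otimes\hat\beta_2)+m(\hat\beta_1\otimes\hat\alpha_2)$), and symmetrically on the second coordinate.

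Finally, by Fubini and the uniform strict inequality,
$$T_p(\tilde\pi)-T_p(\pi)=m\iint\bigl[c(x,y')+c(x',y)-c(x,y)-c(x',y')\bigr]\,\d\hat\alpha(x,y)\,\d\hat\beta(x',y')<-m\delta<0,$$
which contradicts $\pi\in\ma^*_p(\mu,\nu)$; the hypothesis $T_p(\pi)<+\infty$ is what guarantees that this difference is well-defined rather than an $\infty-\infty$ form. The only delicate step is the design of the swap: introducing the product measures $\hat\alpha_1\otimes\hat\beta_2$ and $\hat\beta_1\otimes\hat\alpha_2$ rather than trying to pair specific points is exactly what preserves nonnegativity and both marginals simultaneously, and selecting \emph{product} neighborhoods with disjoint projections makes the bookkeeping transparent. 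Everything else is a continuity/Fubini verification.
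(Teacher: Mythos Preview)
Your proof is correct and follows the same contradiction-by-swapping strategy as the paper; the paper merely sketches the competitor construction and defers the details to \cite{GMcC}, whereas you spell them out explicitly via the product measures $\hat\alpha_1\otimes\hat\beta_2$ and $\hat\beta_1\otimes\hat\alpha_2$. Your additional observation that $x_0\neq x_0'$ and $y_0\neq y_0'$ (allowing disjoint projections) is a clean way to organize the positivity check that the paper leaves implicit.
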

\begin{proof}
Striking for a contradiction, suppose that the opposite identity holds. Then for some $(x,y),(x',y')\in S$ there exists $\eps>0$ such that for every $(a,b,a',b')$ with $\max(|a-x|,|a'-x'|,|b-y|,|b'-y'|)\leq \eps$ one has $|b-a'|^p+|b'-a|^p<|b-a|^p+|b'-a'|^p$. The two balls (in the $\infty$-norm) of radius $\eps$ centered in $(x,y)$ and $(x',y')$ have positive measure. We choose $\eps$ small enough to make their intersection the empty set. Then it is easily possible to replace $\pi$ by a competitor $\pi'$ that coincide with it outside the balls $\mathcal{B}_\infty((a,b),\eps)$, $\mathcal{B}_\infty((a',b'),\eps)$, $\mathcal{B}_\infty((a,b'),\eps)$ and $\mathcal{B}_\infty((a',b),\eps)$, has marginals $\mu$ and $\nu$ and satisfies $T_p(\pi')<T_p(\pi)$ (see e.g \cite[page 129]{GMcC}), a contradiction. 
\end{proof}

This simple principle, allows for a complete characterization of $\pi$ in the case $p>1$. We provide it now for the sake of completeness and for further comparison with the $L^{1^-}$ limit and $L^{1,q}$ secondary problems.
\paragraph{The $L^p$ transport problem for $p>1$}

\begin{figure}[h!!]
\parbox{6cm}{
\begin{flushleft}
   \def\svgwidth{6cm}
      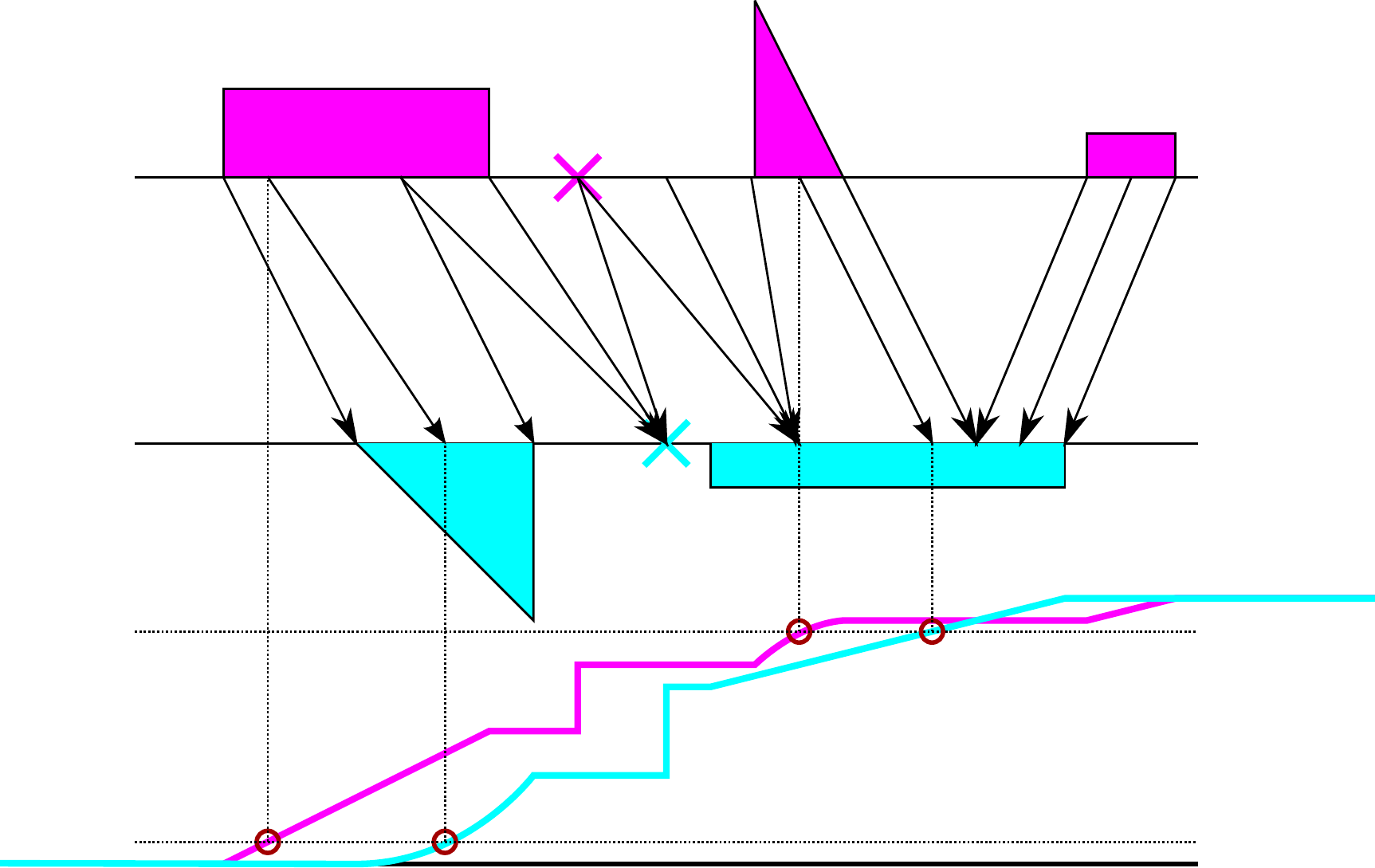
\end{flushleft}
}
\hspace{.5cm}
\parbox{6cm}{
\begin{flushright}
   \def\svgwidth{6cm}
   \input{montagne_cdf_bis.pdf_tex}
\end{flushright}
}
\caption{Left: the quantile transport is based on simultaneous simulation through the cumulative distribution functions $F_\mu$ and $F_\nu$. The dashed horizontal lines (below on the figure) are chosen uniformly in $]0,1[$. The transport routes are materialized by arrows (above on the figure) that do not cross.}
\label{fig:left}
\caption{Right: the moving mass of the excursion coupling is simulated through the intersections with $F_\nu-F_\nu$ ; the commun mass $\mu\wedge \nu$ stay on the on the same place. Given the horizontal line the excursion is chosen uniformly (among two or one pairs for the lines on the figure). The increasing intersection corresponds to $\mu$ and the decreasing one to $\nu$. The transport routes are materialized by arches that do not cross.}
\end{figure}

In this case the identity $|y-x|^p+|y'-x'|^p\leq |y-x'|^p+|y'-x|^p$ obtained by applying Lemma \ref{lem:swap1} is equivalent to $(y'-y)(x'-x)\geq 0$. This may be graphically represented by the condition that segments in $\R^2$ connecting for every $(x,y)\in S$ the point $(x,1)$ to $(y,0)$ are not allowed to cross each other, see Figure \ref{fig:left}. These segments may be interpreted as transport routes between $\mu$ concentrated on the line $\{y=1\}$ and $\nu$ concentrated on the $x$-axis $\{y=0\}$. The striking fact is that, given $\mu$ and $\nu$, the elements $\pi$ of $\ma(\mu,\nu)$ being concentrated on such a set $S$ are in fact reduced to a single transport plan, called the \emph{quantile transport plan}. The latter is the law of $(G_\mu,G_\nu)$ on the probability space $(]0,1[,\lambda)$ of quantiles, where $\lambda$ is the Lebesgue measure and for any a real probability measure $\eta$, the quantile function $G_\eta$ is defined as a pseudo-inverse of $F_\eta:x\mapsto\eta(]-\infty,x])$, namely
\[G_\eta(\alpha)=\inf\{x\in \R:\,F_\eta(x)\geq \alpha\}\]
(this infimum is a minimum).

\subsection{The $L^p$ transport problem for $p<1$ and the $L^{1^-}$ limit transport problem}\label{ss:limit}

\paragraph{Important preliminary comparaison to the convex case $p>1$}
In the previous section we recalled that for $p>1$, provided the $L^p$ transport problem admits a solution $\pi_p$ with $T_p(\pi_p)<\infty$, this trasnport plan $\pi_p$ is the \emph{unique solution} and it is the quantile coupling. In particular it is \emph{independent of the value of $p$}.

The two last assumptions are \emph{both false} in the case $p<1$, which we prove in the following example.

\begin{ex}\label{ex:main}
Set $\mu=\frac12(\delta_0+\delta_5)$ and $\nu=\frac12(\delta_4+\delta_9)$. The set of transport plans is easily described as
\[\ma(\mu,\nu)=\{\pi^\lambda=\lambda\pi''+(1-\lambda)\pi':\ \lambda\in [0,1]\}\]
where $\pi'=\frac12(\delta_{0,4}+\delta_{5,9})$ and $\pi''=\frac12(\delta_{0,9}+\delta_{5,4})$. In the case $p=1/2$ every transport plan $\pi$ gives the same global cost $T_p(\pi^\lambda)=\lambda\frac12(\sqrt4+\sqrt4)+(1-\lambda)\frac12(\sqrt 9+\sqrt 1)=2$. This proves the \emph{non-uniqueness}; $\ma_{1/2}^*(\mu,\nu)=\ma(\mu,\nu)$. Moreover, with the same marginals, depending whether $p<1/2$ or $p>1/2$ the measure $\pi'=\pi^0$ or $\pi''=\pi^1$, respectively, is the unique optimal transport plan. Hence the solution \emph{depends on the value of $p$}.
\end{ex}

\begin{figure}\label{fig:example}
\begin{center}
   \def\svgwidth{10cm}
   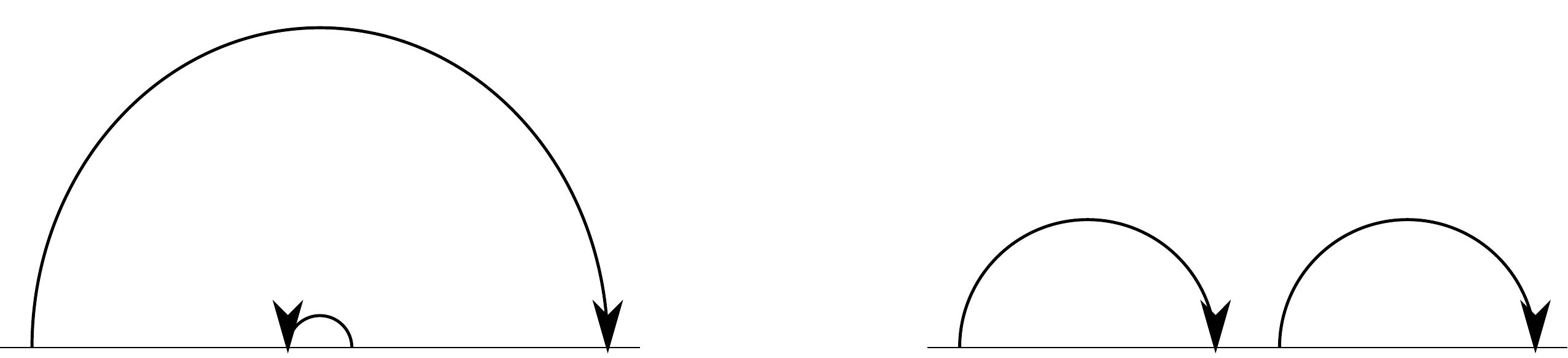
\caption{For transporting the two crosses to the two circles, each symbol being of mass $1/2$, the left pattern is optimal for $p\leq1/2$, the right is for $p\geq1/2$.}
\end{center}
\end{figure}

In the two next paragraphs we prove that in the $L^p$ transport problem (for $p<1$) arches do not connect and do not cross. In the third next paragraph this will be transmitted to the $L^{1^-}$ (limit) transport problem where we also prove that nested arches have the same orientation, completing the proof of $\emph{\ref{un}}\Rightarrow \emph{\ref{trois}}$ in the Main Theorem.

\paragraph{Conclusion concerning coinciding points for $p<1$}

Let $S$ satisfying \eqref{eq:swap} in the swapping lemma, Lemma \ref{lem:swap1} for $p<1$ and $(x,y,x',y')$ be in $S\times S$. Suppose moreover that $y$ and $x'$ coincide meaning that one can take a route from $x$ to $y$ and a second one from $y=x'$ to $y'$. Then studying the variations of $t\in [0,+\infty[\mapsto (t+h)^p-t^p$ and considering $h=|y-y'|$ we see that this can only occur in one of the degenerate case $x=y$ or $x'=y'$ -- compare with the two most right subfigures in Figure \ref{fig:forbidden}. With respect to the terminology of Definition \ref{def:arches} we have proved that a solution $\pi$ of the $L^p$ transport problem $(p<1)$ is concentrated on a set whose \emph{arches do not connect}. This well-known situation has been studied be Gangbo and McCann in \cite[Proposition 2.9]{GMcC} more than 20 years ago. It yields that $\pi$ can be decomposed as the sum $\pi=\pi_\Delta+\pi_0$ where $\pi_\Delta=(\id\times\id)_\#(\mu\wedge \nu)$. Let us remind the argument for the sake of completeness.

\begin{lem}  \label{lem:coincide}
If $\pi\in \ma(\mu,\nu)$ is concentrated on a set $S$ whose arches do not connect, it can be decomposed as follows: $\pi=\pi_\Delta+\pi_0$ where $\pi_\Delta=(\id\times\id)_\#(\mu\wedge \nu)$ and, for $\Delta=\{(x,y)\in \R^2:\,y=x\}$ denoting the diagonal, it holds $\pi_0(\Delta)=0$. Consequently, $\pi_0$ is a transport plan with the two marginals singular with respect to each other. 
\end{lem}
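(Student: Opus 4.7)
The plan is to decompose $\pi$ canonically along the diagonal, show that the off-diagonal part has mutually singular marginals (this is where the ``arches do not connect'' property is used), and deduce that the on-diagonal part must realise $\mu\wedge\nu$.

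First I would set $\pi_\Delta:=\pi|_\Delta$ and $\pi_0:=\pi|_{\R^2\setminus\Delta}$, so that $\pi=\pi_\Delta+\pi_0$ and $\pi_0(\Delta)=0$ by construction. Since $\pi_\Delta$ is concentrated on the diagonal, it has the form $\pi_\Delta=(\id\times\id)_\#\mu_\Delta$ with $\mu_\Delta:=(\pr^1)_\#\pi_\Delta=(\pr^2)_\#\pi_\Delta$. From $\pi_\Delta\le\pi$ one immediately reads off $\mu_\Delta\le\mu$ and $\mu_\Delta\le\nu$, hence the easy bound $\mu_\Delta\le\mu\wedge\nu$; the substantive step is upgrading this to an equality.

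The heart of the argument would be the mutual singularity of $\alpha:=(\pr^1)_\#\pi_0$ and $\beta:=(\pr^2)_\#\pi_0$. After replacing $S$ by a Borel subset of full $\pi$-measure (the hypothesis is inherited by subsets), $\pi_0$ is concentrated on the Borel set $S\setminus\Delta$, whose points $(x,y)$ all satisfy $x\ne y$. The sets $A_0:=\pr^1(S\setminus\Delta)$ and $B_0:=\pr^2(S\setminus\Delta)$ are analytic, hence universally measurable, and carry $\alpha$ and $\beta$ respectively. The ``do not connect'' hypothesis forces $A_0\cap B_0=\emptyset$: a point $z$ in the intersection would provide two routes $(x,z),(z,y)\in S$ with $|z-x|,|y-z|>0$, contradicting the defining implication applied to $(x,y)$ and $(x',y'):=(z,y)$, which demands $y\ne x'$ but gives $y=z=x'$. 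Inner regularity of finite Borel measures on $\R$ then supplies disjoint Borel sets $A'\subseteq A_0$, $B'\subseteq B_0$ with $\alpha(\R\setminus A')=0=\beta(\R\setminus B')$, i.e.\ $\alpha\perp\beta$ in the classical sense.

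To conclude I would exploit $\mu=\mu_\Delta+\alpha$ and $\nu=\mu_\Delta+\beta$ on the Borel partition $A'\sqcup B'\sqcup(A'\cup B')^c$: on $A'$, $\beta$ vanishes so $\nu=\mu_\Delta\le\mu$ there, which gives $(\mu\wedge\nu)|_{A'}=\mu_\Delta|_{A'}$; the symmetric argument handles $B'$; and off $A'\cup B'$ both $\alpha$ and $\beta$ vanish so $\mu=\nu=\mu_\Delta$. Summing yields $\mu\wedge\nu=\mu_\Delta$, hence $\pi_\Delta=(\id\times\id)_\#(\mu\wedge\nu)$, which is the claimed decomposition. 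The final sentence of the lemma is then immediate since $\pi_0$ has marginals $\alpha$ and $\beta$. The only slightly delicate point is the measurability of the projections $A_0,B_0$, routinely handled via inner regularity; the geometric content reduces to the three-line verification that $A_0\cap B_0=\emptyset$.
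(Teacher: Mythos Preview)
Your proof is correct and follows essentially the same route as the paper's: split $\pi$ along the diagonal, use the ``do not connect'' hypothesis to see that the two projections of $S\setminus\Delta$ are disjoint so the off-diagonal marginals are mutually singular, and deduce that the diagonal part realises $\mu\wedge\nu$. You add some extra care (analyticity of the projections, inner regularity, and spelling out the equivalence $\alpha\perp\beta\Leftrightarrow\mu_\Delta=\mu\wedge\nu$) that the paper leaves implicit, but the argument is the same.
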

\begin{proof} Let us write $\pi=\pi_\Delta+\pi_0$ where $\pi_0$ is concentrated on $S\setminus \Delta$ and $\pi_\Delta$ is concentrated on $\Delta$. Therefore $\pi_\Delta$ writes $(\id\times\id)_\#\eta$ and the marginals of $\pi_0$ are $\mu-\eta$ and $\nu-\eta$. They are respectively concentrated on the two projections $\{x'\in \R:\,\exists y'\neq x',\,(x',y')\in S\setminus \Delta\}$ and $\{y\in \R:\,\exists x\neq y,\,(x,y)\in S\setminus \Delta\}$ of the set $S\setminus \Delta$. The fact that these two sets do not intersect is the direct consequence of our assumption. It follows $\mu-\eta\perp\nu-\eta$ which is equivalent to $\eta=\mu\wedge \nu$.
\end{proof}

% for some $\eta\leq (\mu\wedge \nu)$. has marginals concentrated on two disjoint sets namely $\{x\in \R:\,\exists y\neq x,\,(x,y)\in S\}$ and . If these two sets intersect we obtain a contradiction in our combinatorial/geometric conditions (see as illustration the two patterns on the right on Figure \ref{fig:forbidden}). Hence the two measures are singular with respect to each other.)

\paragraph{Interpretation of the swapping lemma for $p<1$ and non coinciding points}
For $p<1$ the use of the swapping lemma furnishes, compared to $(y'-y)(x'-x)\geq 0$, less direct information. Equation \eqref{eq:swap} may be seen, similarly as in Example \ref{ex:main}, as a competition of two transport plans, each transporting two points in two other points, in one or the other way. For this reduced transport problem if $|y-x|^p+|y'-x'|^p\neq |y'-x|^p+|y-x'|^p$ at most one of the two is true $(x,y,x',y')\in S\times S$ or $(x,y',x',y)\in S \times S$. Unlike the situation studied for $p>1$, to determine which transport is better it does not only depend on the relative positions of $x$ and $y$ with respect to $x'$ and $y'$, respectively, but on the $4!$ relative positions of the four points. Moreover, even though this ranking of the four point is important and yields the conclusion in configuration $xxyy$ and $xyyx$ we know since Example \ref{ex:main} and Figure \ref{fig:example} that it does not permit to conclude in configuration $xyxy$. 

\begin{notation} The notation $xyxy$ denotes the configurations of two routes $(x,y)$ and $(x',y')$ where $x<y<x'<y'$ or $x<y'<x'<y$ or $x'<y<x<y'$ or $x'<y'<x<y$, the different alternatives being signed as $xyx'y'$, $xy'x'y$, $xy'xy'$ and $x'yxy'$, respectively. 
\end{notation}
%The information of the swapping lemma, Lemma \ref{lem:swap1} will depend on the respective position of the points. During this paragraph we only consider the situation of $(x,y,x',y')$ all distinct.  In Figure \ref{} we simply represent the transport routes with half circle arcs ended with arrows
%We may consider $4!$ way to order these points, but, due to the symmetries  in $\mu,\nu$ (i.e $(x,x')\leftrightarrow (y,y')$) in the transport problem and in the order $(x,y)\leftrightarrow (x',y')$, we can restrict to the representative words $xx'yy$, $xyyx'$ and $xyx'y$, where the first pattern for instance $x<x<y<y$).

Since the $L^p$ problem in $\ma(\mu,\nu)$ is in bijection with the one in $\ma(\nu,\mu)$, when we study configuration $xyxy$ we in fact also study $yxyx$. In order to consider all configurations without coinciding points we finally only have to look at $xxyy$, $xyyx$ and $xyxy$. Here is the conclusion for these three cases. They are also illustrated on Figure \ref{fig:forbidden} (corresponding, in the same order, to the first three pairs of patterns, from the left).

\begin{itemize}\label{list:lp}
\item In the first case $xx'y'y$ is allowed and $xx'yy'$ forbidden. To see that, one can study the variations of $t\in [0,+\infty[\mapsto (t+h)^p-t^p$ where $h>0$ and apply it to $h=|y-y'|$.
\item In the second case $xyy'x'$ is allowed and $xy'yx'$ forbidden. This is a simple consequence of the fact that $d\in \R^+\mapsto d^p$ is increasing.
\item In the last case, as observed in Example \ref{ex:main}, it depends on $p$ whether $xyx'y'$ or $xy'x'y$ is forbidden or authorized.%(see also the third pattern on Figure \ref{fig:forbidden}).
\end{itemize}
With the two first points we have proved that the arches of $S$ \emph{do not cross}. 
%With the first and the last point we obtain that nested arches \emph{have the same orientation} (this will be finished when considering cases with points that coincide)

%As for $p>1$ we can sum up the information coming from the two first points with a graphical representation arrows lines should not cross (see Figure \ref{}). The two first properties can indeed be summed up saying that in the plane the two circles of diameter $|y-x|$ and $|y'-x'|$ linking $x$ and $y$, and $x'$ and $y'$, respectively, do not intersect.

\paragraph{From the $L^p$ to the $L^{1^-}$ transport problem} 
In this paragraph we prove that the solutions of the $L^{1^-}$ transport problem have arches that do not connect, do not cross and have the same orientation when they are nested. This is implication $\emph{1}\Rightarrow \emph{3}$ of the Main Theorem.

Consider $\pi$ such that there exists a sequence $(\pi_n)_{n\in \N}$ weakly converging to $\pi$ where $\pi_n\in \ma_{p_n}^*(\mu,\nu)$ for $p_n\to 1^-$. It also holds $\pi_n\otimes\pi_n\to \pi\otimes \pi$, actually an equivalent fact. Hence, if $F\subset \R^4$ is a closed set and $\pi_n\otimes \pi_n(F)=1$, the equation goes to the limit. In particular this holds for the complementary set $F_1$ of the open set $\{(x,y,x',y')\in\R^4:\, x<x'<y<y'\text{ or }x'<x<y'<y\}$ that encodes the condition on non-intersecting arches (first condition of Definition \ref{def:arches}). As it is satisfied by $\pi_n$ it is also satisfied by $\pi$. Therefore $\pi$ is concentrated on a set whose arches \emph{do not cross}.

%Let $F_3$ be closed set defined by $F_3^c=\{(x,y,x',y')\in\R^4:\, x<y'<x'<y\text{ or }x'<y<x<y'\}$. A transport plan $\pi$ is concentrated on a set $S$ whose nested arches have the same orientation if and only if $(\pi\otimes \pi)(F_3)=1$.
%
%£*NJNJ* 
%Une condition ouverte serait de dire que $x<y'<x'<y$ est interdite. Pour l'instant on dit que $x\leq y'<x'\leq y$ l'est mais on dit aussi qu'il est interdit de connecter. Cela permettrait de faire avec $F_3$ de la m\^eme fa\c con qu'avec $F_1$.
%*NJNJ*§

Suppose by contradiction that the set $F^c_3\subset \R^4$ of pairs of nested arches that do not have the same orientation has positive measure for $\pi\otimes \pi$. Due to the countable additivity of $\pi\otimes \pi$ there exists rational numbers $x<y'<x'<y$ and $\eps\in \mathbb{Q}^+$ with $\min(|y'-x|,|x'-y'|,|y-x'|)>\eps$ such that $U=]x\pm\eps/10[\times ]y\pm \eps/10[\times ]x'\pm\eps/10[\times ]y'\pm \eps/10[$ has positive measure. Since 
\[[(y'-x)+2\eps/10]^{p_n}+[(y-x')+2\eps/10]^{p_n}\leq [(y-x)-2\eps/10]^{p_n}+[(x'-y')-2\eps/10]^{p_n}\]
for $p_n$ close enough to 1, the open set $U$ has measure zero for the corresponding measures $\pi_n\otimes \pi_n$. Therefore, it has measure zero for $\pi\otimes\pi$ as well, a contradiction. We conclude that $\pi$ concentrated on a set whose \emph{nested arches have the same orientation}.

%Any solution of the $L^p$ transport problem for $p<1$ satisfies 1 and 2 in Definition \ref{def:mono} and this is a closed condition (For instance $\{(x,y,x',y')\in\R^4:\, x<x'<y<y'\}$ is an open set). 
%Therefore a weak cluster point $\pi$ for these measures satisfy the same two conditions.

The implication $\emph{\ref{un}}\Rightarrow\emph{\ref{trois}}$ finally amounts to prove that $\pi$ is concentrated on a set satisfying the second condition of Definition \ref{def:arches}: \emph{the arches do not connect}. Recall from Lemma \pageref{lem:coincide} that $\pi_n=(\id\times \id)_\#(\mu\wedge \nu)+\pi_n^0$ where $\pi^0_n$ has marginals $\mu^0=\mu-(\mu\wedge \nu)$, $\nu^0=\nu-(\mu\wedge \nu)$ and $\mu^0\perp \nu^0$. Thus $\pi$ is the limit of the sequence if and only if it can be written $(\id\times \id)_\#(\mu\wedge \nu)+\pi^0$ where $\pi^0$ is the limit of $(\pi^0_n)_{n\in \N}$. Therefore, $\pi^0$ has marginals $\mu_0$ and $\nu_0$. Thus there exists $A$ and $B=A^c$ such that $\pi$ is concentrated on $E=[(A\times \R)\cap(\R \times B)]\cup \Delta$. This exactly means that it is concentrated on a set whose arches do not connect.
%
%
% We know that there exists $S$ such that $\pi(S)$ and for every elements $(x,y)$ and  $(x',y')$ in $S$ we have $|y-x|+|y'-x'|\leq |y'-x|+|y-x'|$. Without loss of generality we can assume that $S$ is included in $E$. If $|y-x|+|y'-x'|= |y'-x|+|y-x'|$.
%
%
%In fact we may consider $S_p=\{(x,y,x',y'):\, |x-y|^p+|x'-y'|^p\leq |x-y'|^p+|x'-y|^p\}$ that is also a closed set. On can take the intersection $E_p=\overline{S_p}\cap E$ where $E=\{x<y'<y<x'\}$. This set is open and of $\pi_n$-mass zero for any $\pi_n\in\ma_*^p$ moreover $E_p\supset E_{p'}$.
%

\subsection{The $L^{1,q}$ secondary transport problem}\label{ss:second}

\paragraph{Swapping lemma for the $L^{1,q}$ secondary transport problem and interpretation}
%Letting $p$ go to $1$ we will see later that we kept most of the combinatorial feature (for instance roughly speaking ``inversion of the orientation'') of the $L^p$ problem with $p<1$. Our main theorem prove will prove that we in fact select a special solution of the $L^1$ problem that is a solution of any secondary $L^p$ problem for $p<1$. 
We furnish now a swapping lemma corresponding to the $L^{1,q}$ transport problem.

\begin{lem}[Swapping lemma for the secondary problem]\label{lem:swap2}
Let $\pi$ be a transport plan such that $T_1(\pi)<+\infty$ and $\pi\in \ma_{1,q}^{**}(\mu,\nu)$ for some fixed $q<1$. There exists a set $S\subset \R^2$ with $\pi(S)=1$ such that for any $(x,y)$ and $(x',y')$ in $S$ it holds
\begin{align}\label{eq:swap0}
|y-x|+|y'-x'|\leq |y-x'|+|y'-x|\
\end{align}
and if $|y-x|+|y'-x'|=|y-x'|+|y-x'|$, the following holds:
\begin{align}\label{eq:swap1}
|y-x|^q+|y'-x'|^q\leq |y-x'|^q+|y'-x|^q.
\end{align}
\end{lem}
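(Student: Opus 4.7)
The strategy is to replicate the proof of Lemma \ref{lem:swap1} in two layers, one for $L^1$-optimality and one for the secondary $L^q$-refinement. I would take $S:=\spt(\pi)$ (so $\pi(S)=1$); since $\ma_{1,q}^{**}(\mu,\nu)\subset\ma_1^*(\mu,\nu)$, Lemma \ref{lem:swap1} at $p=1$ already delivers \eqref{eq:swap0} for every pair in $S$. The substantive step is establishing \eqref{eq:swap1} under equality of \eqref{eq:swap0}, which I would approach by contradiction.

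Suppose $(x,y),(x',y')\in S$ satisfy $|y-x|+|y'-x'|=|y-x'|+|y'-x|$ together with the strict reverse of \eqref{eq:swap1}. A direct case analysis over the six orderings of $\{x,x',y,y'\}$ shows that the auxiliary function
\[D(a,b,a',b'):=|b-a|+|b'-a'|-|b-a'|-|b'-a|\]
vanishes identically only in the open arrangement of crossing arches $a<a'<b<b'$ and in the open arrangement of nested arches of common orientation $a<a'<b'<b$ (together with their reflections); in every other arrangement $D$ has a definite non-zero sign. Assuming the pair lies in one of these two ``identity'' arrangements, I would choose small disjoint open product neighborhoods $U\ni(x,y)$, $U'\ni(x',y')$ on which (i) every pair remains in the same arrangement, so that $D\equiv 0$ throughout, and (ii) the strict inequality $|b-a|^q+|b'-a'|^q>|b-a'|^q+|b'-a|^q$ persists by continuity of the $q$-th power. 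Since $(x,y),(x',y')\in\spt(\pi)$ one has $\pi(U),\pi(U')>0$, and the standard disintegration-and-swap construction of Lemma \ref{lem:swap1} (cf.\ \cite[page 129]{GMcC}) produces a competitor $\pi'\in\ma(\mu,\nu)$ whose $L^1$ cost equals that of $\pi$ (by (i), so $\pi'\in\ma_1^*(\mu,\nu)$) and whose $L^q$ cost is strictly smaller (by (ii)), contradicting $\pi\in\ma_{1,q}^{**}(\mu,\nu)$.

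The main obstacle I foresee is the degenerate configuration in which equality in \eqref{eq:swap0} is enforced by a coincidence of two of the four values, most typically $y=x'$ (connecting arches), so that $(x,y,x',y')$ sits on the boundary between arrangements. There a naive perturbative swap at nearby pairs generically changes the $L^1$ cost by an amount of the same order as the neighborhood size, and the contradiction does not follow directly. Since the lemma only requires \emph{some} full-measure $S$, not $S=\spt(\pi)$, I would absorb these boundary pairs into a $\pi\otimes\pi$-null set and discard them via a Fubini argument: the set of equality-by-coincidence pairs is supported on a lower-dimensional subvariety of $\R^2\times\R^2$ and has $\pi\otimes\pi$-measure zero away from atoms, while the atomic contributions can themselves be handled by an \emph{exact} (not approximate) swap that preserves the $L^1$ cost identically and strictly decreases the $L^q$ cost by the strict subadditivity of $t\mapsto t^q$ on $[0,+\infty[$. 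Making this bookkeeping measurable and producing the final set $S$ is the technical heart of the argument.
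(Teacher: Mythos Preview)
Your overall architecture is right and matches the paper: take $S$ inside $\spt(\pi)$, get \eqref{eq:swap0} from Lemma~\ref{lem:swap1} at $p=1$, and for \eqref{eq:swap1} run a contradiction swap that preserves $T_1$ while strictly lowering $T_q$. Your treatment of the open arrangements (crossing $x<x'<y<y'$ and nested $x<x'<y'<y$) is fine, and you correctly isolate the connecting configuration $y=x'$ (and its mirror $x=y'$) as the only genuine obstruction.

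The gap is in how you propose to dispose of that configuration. First, the Fubini idea does not deliver what you need: even if the set $B=\{((x,y),(x',y')):y=x'\}$ were $\pi\otimes\pi$-null, there is in general no full-measure $S$ with $(S\times S)\cap B=\emptyset$ (think of the diagonal of $[0,1]^2$ under Lebesgue). And the property you are trying to manufacture, namely that $\pr^2(S\setminus\Delta)$ and $\pr^1(S\setminus\Delta)$ are disjoint, is exactly the ``arches do not connect'' clause of \emph{\ref{trois}} that the lemma is meant to feed into---you cannot shortcut it by a null-set bookkeeping. Second, your ``exact swap for atoms'' needs $\pi$-mass \emph{on the fiber} $\R\times\{z\}$ (resp.\ $\{z\}\times\R$), not merely $(x,z),(z,y')\in\spt(\pi)$; membership in the support does not guarantee that.

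The paper resolves this differently. It removes from $\spt(\pi)$ four explicit $\pi$-null sets of ``one-sided'' support points, e.g.\ $N_{y,-}=\{(x,y)\in\spt(\pi):\exists\,\eps>0,\ \pi([x-\eps,x+\eps]\times[y-\eps,y])=0\}$, and proves $\pi(N_{y,-})=0$ in a separate lemma. On the resulting $S$, whenever $x<x'=y=:z<y'$ one can select the swap mass on the \emph{correct side of $z$}: mass going from near $x$ into a small \emph{left} neighborhood of $z$, and mass leaving a small \emph{right} neighborhood of $z$ toward near $y'$. All perturbed routes then stay in the arrangement $\tilde x<\tilde y\le\tilde x'<\tilde y'$, so $T_1$ is preserved exactly while $T_q$ drops strictly. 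That one-sided swap, together with the nullity of the $N$-sets, is precisely the ``technical heart'' you flagged but did not supply.
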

\begin{proof}

%Let $X_{a,b}$ be the set of points $x$ such that there exists $\eps>0$ with $\pi(]x-\eps,x]\times[a,b])=0$ or $\pi([x,x+\eps[\times[a,b])=0$ but $\pi([x-\eps,x+\eps]\times[a,b])>0$. This set is countable. $X=\bigcup_{a<b\in \mathbb{Q}} X_{a,b}$ is also countable. Consider $S=\spt(\pi)\setminus (X\times \R\cup \R\times Y)$.
Let $\pi$ be an optimal transport plan for the $L^1$ primary and the $L^{1,q}$ secondary transport problem. Let $N_{y,-}$ be  $\{(x,y)\in \spt(\pi):\,\exists \eps>0, \pi([x-\eps,x+\eps]\times[y-\eps,y])=0\}$. It is the set of routes in the support of $\pi$ such that at some size $\eps>0$ no mass is transported from the ball of center $x$ and radius $\eps$ to the left of $y$ at distance less than $\eps$. We have $\pi(N_{y,-})=0$. The proof of this result is postponed to Lemma \ref{lem:postponed}. The symetricaly defined sets $N_{y,+}$, $N_{x,-}$ and $N_{x,+}$ have measure zero too (for instance $N_{x,+}$ here denotes the set $\{(x,y)\in \spt(\pi):\,\exists \eps>0, \pi([x,x+\eps]\times[y-\eps,y+\eps])=0\}$). We define now $S$ as $S_0\setminus (N_{y,-} \cup N_{y,+} \cup N_{x,-} \cup N_{x,+})$ where $S_0$ is any set, as for instance $\spt(\pi)$, satisfying the condition in Lemma \ref{lem:swap1}. Observe that $\pi(S)=1$.

%This happens if $x<x'= y<y'$ and it is sufficient to treat of this case as we show now until the end of this paragraph.
We are now ready for the proof. It is almost identical to that of Lemma \ref{lem:swap1} that we invite the reader to read again: the principle is that $\pi$ is compared to a competitor $\pi'$ defined rerouting part of the mass around $(x,y)$ and $(x',y')$ to mass aroud $(x,y')$ and $(x',y)$. From Lemma \ref{lem:swap1} we note that \eqref{eq:swap0} is satisfied for any $(x,y,x',y')\in S\times S$. Aiming for a contradiction, suppose that  for some routes $(x,y)$ and $(x',y')$ in $S$ it holds at the same time $|y-x|+|y'-x'|= |y-x'|+|y'-x|$ and $|y'-x|^q+|y-x'|^q< |y-x|^q+|y-x|^q$. Until the end of the paragraph let us see that without loss of generality we can assume $x<x'\leq y<y'$: The first equation induces `$\max(x,x')\leq \min(y,y')$ or $\max(y,y')\leq \min(x,x')$'. Without loss of generality we can assume the first case. The second inequality implies $x\neq x'$ and $y\neq y'$ and without loss of generality we assume $x<x'$. This finally implies $y<y'$.

Apparently the swapping method used in the proof of Lemma \ref{lem:swap1} that consists in picking mass in equal quantity around the points $x$, $x'$, $y$ and $y'$ can be applied without problem providing a competitor $\pi'\in \ma(\mu,\nu)$ with $T_q(\pi')<T_q(\pi)$, a contradiction. However, this argument only works if $x'<y$ and not directly if $x'=y$ because $\pi'\in\ma^*_1(\mu,\nu)$ can not be certified: during the swap the routes $(x,y)$ and $(x',y')$ with $x<x'=y<y'$ have a priori in their neighborhood routes $(\tilde{x},\tilde{y})$ and $(\tilde{x}',\tilde{y}')$ with $\tilde{x}<\tilde{y}<\tilde{x}'<\tilde{y}'$ that do no longer satisfy \eqref{eq:swap0}, so that $T_1(\pi')>T_1(\pi)$ is made possible. The relation $T_q(\pi')<T_q(\pi)$ is no longer a contradiction because $\pi'\notin \ma^*_1(\mu,\nu)$. In this critical situation let us call $z$ the point $x'=y$. As $(x,y)$ and $(x',y')$ are not in $N_{y,-} \cup N_{y,+} \cup N_{x,-} \cup N_{x,+}$ we can swap selecting the mass on the proper side of $z$. More precisely for $\pi$ there is some mass traveling from a neighborhood (as small as we want) of $x$ to a small right neighborhood of $z$. There is the same mass on a small left neighborhood of $z$ transported to a small neighborhood of $y'$. We swap for defining $\pi'$: the mass around $x$ is transported around $y'$ and the mass directly on the left of $z$ is transported directly to the right of $z$. We obtain $T_q(\pi')<T_q(\pi)$ and keep  $T_1(\pi')=T_1(\pi)$.

\end{proof}

\begin{lem}\label{lem:postponed}
Let $\pi$ be a solution to the $L^{1,q}$ secondary optimal transport and $N_{y,-}$ be  $\{(x,y)\in \spt(\pi):\,\exists \eps>0, \pi([x-\eps,x+\eps]\times[y-\eps,y])=0\}$. Then $\pi(N_{y,-})=0$.
\end{lem}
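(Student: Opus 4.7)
The plan is to argue by contradiction. Assume $\pi(N_{y,-})>0$. Since $N_{y,-}=\bigcup_{k\geq 1}N_{y,-}^{(1/k)}$ with $N_{y,-}^{(\eps)}:=\{(x,y)\in\spt(\pi):\pi([x-\eps,x+\eps]\times[y-\eps,y])=0\}$, I first fix $\eps>0$ such that $A:=N_{y,-}^{(\eps)}$ has $\pi(A)>0$. Cover $\R^2$ by a countable grid of squares of side $\eps/100$ and pick a square $Q=[a,b]\times[c,d]$ with $\pi(Q\cap A)>0$. After treating separately the $\pi$-null exceptional part of $A$ on and near the diagonal $\{y=x\}$ (handled by a countable-union argument on left-isolated points of $\spt(\mu\wedge\nu)$, which can only contribute $\pi$-mass through atoms, themselves excluded from $N_{y,-}$), one may assume $Q\subset\{y>x+\eps\}$. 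The individual empty rectangles for $(x,y)\in Q\cap A$ jointly contain the fixed rectangle $R:=[a-\eps/2,b+\eps/2]\times[c-\eps/2,c-\eps/4]$, so $\pi(R)=0$; any $\pi$-mass landing in $\R\times[c-\eps/2,c-\eps/4]$ is therefore sourced outside $[a-\eps/2,b+\eps/2]$.

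The core of the proof is a two-point swap. In the non-degenerate sub-case there exists a partner $(x^*,y^*)\in\spt(\pi)$ with $y^*\in[c-\eps/2,c-\eps/4]$ and $x^*<a-\eps/2$ (the symmetric case $x^*>b+\eps/2$ is analogous). Pairing it with any $(x_1,y_1)\in Q\cap\spt(\pi)$, I form the candidate competitor $\pi'$ by reassigning an infinitesimal portion of the routes $(x_1,y_1),(x^*,y^*)$ to the routes $(x_1,y^*),(x^*,y_1)$. The hypothesis $Q\subset\{y>x+\eps\}$ forces the strict ordering $y_1>y^*>x_1>x^*$, so all four distances are plain differences and the swap is $L^1$ \emph{iso-cost}:
\[
|y_1-x_1|+|y^*-x^*| \;=\; (y_1+y^*)-(x_1+x^*) \;=\; |y^*-x_1|+|y_1-x^*|,
\]
hence $\pi'\in\ma_1^*(\mu,\nu)$. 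Setting $a':=y_1-x_1$, $b':=y^*-x^*$, $\delta:=y_1-y^*>0$ and $D:=x_1-x^*>\eps/2>0$, the swap sends $(a',b')\mapsto(a'-\delta,b'+\delta)$, and a direct calculation gives $|a'-b'|=|\delta-D|<\delta+D=|(a'-\delta)-(b'+\delta)|$. Thus the new pair is strictly more spread around the common midpoint $(a'+b')/2$ than the old one, and by strict concavity of $t\mapsto t^q$ on $(0,1)$ the $L^q$ cost strictly decreases, contradicting the secondary optimality of $\pi$.

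The main obstacle is the degenerate sub-case in which no partner $(x^*,y^*)$ with $y^*\in[c-\eps/2,c-\eps/4]$ exists in $\spt(\pi)$, i.e., $\nu([c-\eps/2,c-\eps/4])=0$. One first attempts to slide the strip within $[c-\eps,c]$; if every sub-strip is $\nu$-null, then $\nu([c-\eps,c])=0$, so $\nu$ has a left-gap of width $\eps$ below $c$. The $y$-coordinates of the points of $Q\cap A$ are then left-isolated in $\spt(\nu)$; such points form at most a countable set, so positive $\pi$-mass on the corresponding routes can only come from $\nu$-atoms. However, a $\pi$-atom $(x_0,y_0)$ cannot lie in $N_{y,-}$ because the rectangle $[x_0-\eps,x_0+\eps]\times[y_0-\eps,y_0]$ already contains this atom, while the remaining non-atomic portion of $\pi$ on the horizontal line $\R\times\{y_0\}$ supplies a partner $(x^*,y_0)$ whose $x$-coordinate is still forced outside $[a-\eps/2,b+\eps/2]$ by the $\pi$-nullity of $R$. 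The same iso-cost swap applies on this line and again yields a strict $L^q$ decrease, closing the contradiction in all cases.
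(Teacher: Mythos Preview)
Your approach is entirely different from the paper's, and it contains a genuine gap.

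\textbf{The paper's argument.} The paper never uses optimality. It simply observes that for rationals $a<b$, if $(x,y)\in N_{y,-}$ with $]a,b[\subset[x-\eps,x+\eps]$, then $\nu_{a,b}([y-\eps,y])=0$ where $\nu_{a,b}:=(\pr^2)_\#(\pi|_{]a,b[\times\R})$. Thus every such $y$ lies in $\spt(\nu_{a,b})$, is not an atom of $\nu_{a,b}$, and is left--isolated in $\spt(\nu_{a,b})$. Left--isolated points of a closed subset of $\R$ are countable, so $\nu_{a,b}(\pr^2(A_{a,b}))=0$, hence $\pi(A_{a,b})=0$, and one sums over $(a,b)\in\mathbb{Q}^2$. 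The statement therefore holds for \emph{any} Borel measure $\pi$, which already suggests that a swap--based proof via secondary optimality is the wrong tool.

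\textbf{The gap in your swap.} Your ``symmetric case $x^*>b+\eps/2$'' is not analogous. By the $L^1$ swapping lemma one is forced into $x^*\le y^*$ in that case (the alternative $y^*<x^*$ would violate \eqref{eq:swap0}), so the ordering is $x_1<x^*\le y^*<y_1$. With $\delta=y_1-y^*>0$ and now $D=x^*-x_1>0$, the original spread is
\[
|a'-b'|=|(y_1-x_1)-(y^*-x^*)|=\delta+D,
\]
while after rerouting to $(x_1,y^*),(x^*,y_1)$ the new lengths $y^*-x_1$ and $y_1-x^*$ have spread $|D-\delta|<\delta+D$. So the swapped pair is \emph{less} spread around the common mean, and strict concavity of $t\mapsto t^q$ makes the $L^q$ cost go \emph{up}, not down. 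No contradiction follows. Nothing prevents all partners with $y^*\in[c-\eps/2,c-\eps/4]$ from living on this side, so the case cannot be dismissed.

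\textbf{Secondary issues.} In your degenerate branch you assert that the $y$--coordinates of $Q\cap A$ are left--isolated in $\spt(\nu)$; this is not what the hypothesis gives (it only gives a gap for the \emph{restricted} second marginal, which is exactly the paper's $\nu_{a,b}$). Moreover, your closing swap ``on the horizontal line $\R\times\{y_0\}$'' pairs $(x_1,y_1)$ with $(x^*,y_0)$ where $y_0=y_1$; that swap is the identity and yields no strict decrease. Finally, the region $\{y<x-\eps\}$ and the near--diagonal strip are only gestured at.

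If you want to salvage a direct argument, drop the optimality altogether and run the paper's localization: the whole lemma is a statement about supports and marginals, and your own degenerate--case intuition (left--isolated points are countable, non--atoms contribute nothing) becomes a complete proof once you apply it to $\nu_{a,b}$ rather than to $\nu$.
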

\begin{proof}
Fix $a<b$ two rational numbers. Consider the set $A_{a,b}$ of points $(x,y)\in \spt(\pi|_{]a,b[\times \R})$ such that there exists $\eps>0$ with $\pi([x-\eps,x+\eps]\times[y-\eps,y])=0$ and moreover $x\in]a,b[\subset [x-\eps,x+\eps]$. Then every point $y\in \pr^2(A_{a,b})$ is in the support of the measure $\nu_{a,b}$ defined by $(\pr^2)_\#\pi|_{]a,b[\times \R}$, but it is also isolated on the left in the sense that $\nu_{a,b}([y-h,y])=0$ for $h$ small enough. One can easily convince that there are countably many such points $y$ and they are not atoms. Therefore, $\nu(\pr^2(A_{a,b}))=0$ and $\pi(A_{a,b})=\pi|_{]a,b[\times \R}(A_{a,b})\leq \nu(\pr^y(A_{a,b}))=0$. Finally, since $N_{y,-}=\bigcup_{a<b\in \mathbb{Q}} A_{a,b}$ we find $\pi(N_{y,-})=0$.
\end{proof}

\begin{figure}
\begin{center}
   \def\svgwidth{12cm}
   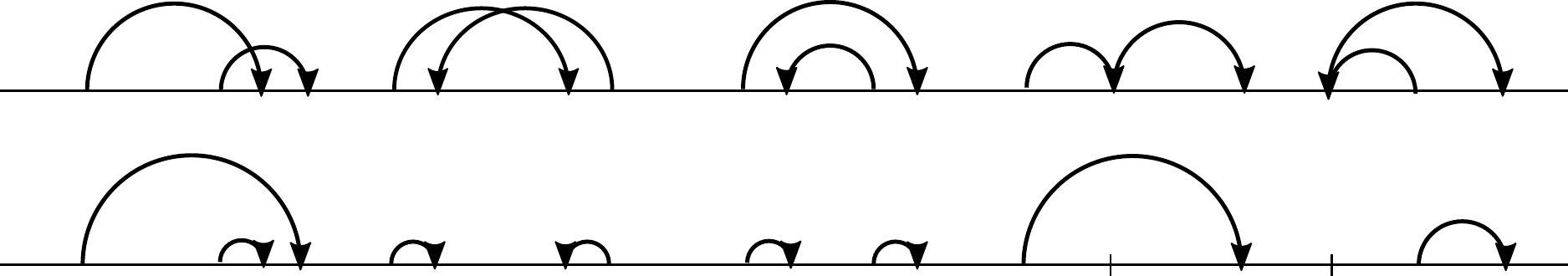
\caption{Forbidden (first line) versus authorized (second line) configurations in the $L^{1,q}$ secondary transport problem, where $p<1$.} \label{fig:forbidden}
\end{center}
\end{figure}

We can now give the geometric meaning for the routes of set $S$ as in Lemma \ref{lem:swap2}. As for the $L^{1^-}$ problem we compare the different configurations with their alternative -- recall the list on page \pageref{list:lp} -- and again these comparaisons are illustrated on the three most left patterns of Figure \ref{fig:forbidden}.

\begin{itemize}
\item  Pattern $xx'y'y$ is allowed and $xx'yy'$ forbidden. To see that, since $|y'-x'|+|y-x|=|y'-x|+|y-x'|$ we have to look at the secondary problem. 
\item Pattern $xyy'x'$ is allowed and $xy'yx'$ forbidden because as in the $L^p$ problem, \eqref{eq:swap0} is a strict inequality.
\item Finally, pattern $xyx'y'$ is allowed and $xy'x'y$ is forbidden because as in the $L^p$ problem, \eqref{eq:swap0} is a strict inequality.
\end{itemize}

%The graphical interpretation of these three points complete the one given above for the two points. We again observe that the half-circles can not intersect but with respect to the third we add the fact that for two arcs included in each other the orientation must be the same: we already knew that $xx'y'y$ is allowed but we know now that $xy'x'y$ is forbidden (and also, for instance, $y'xyx'$).

Some of the points $x,y,x',y'$ may be equal. Swapping does not change the cost if $x=x'$ or $y=y'$. We have only to look at $x=y'$ and $x'\neq y$ and see that it is never better than $x=y$ and $x'\neq y'$ (see on Figure \ref{fig:forbidden} the two patterns on the right).
\begin{itemize}
\item For the pattern $x(yx)y$, meaning $x<y=x<y$, we have $|y'-x'|+|y-x|=|y'-x|+|y-x'|$ but the secondary problem tells us to choose $xx'y'y$ in place of $xyx'y'$.
\item For $(xy)xy$, from the primary problem we choose $x=y<x'<y'$ in place of $x=y'<x'<y$.
\end{itemize}

Finally as for the $L^{1^-}$ limit transport problem, we have proved that if $\pi$ is a solution of the $L^{1,q}$ secondary transport problem it is concentrated on a set $S$ whose arches do not cross, do not connected, and have the same orientation when they are nested.

\section{Transport plans concentrated on monotone sets of arches are the excursion coupling}\label{sec:3to4}

\subsection{Proof of Theorem \ref{thm:construction} defining the excursion coupling}\label{ssec:tv}

We need to explain why Theorem \ref{thm:construction} can define the excursion coupling. We only need to investigate the construction in case \emph{1} where $\mu$ and $\nu$ are singular ($\mu\perp \nu$), which we thus assume in the present subsection.

With the following lemma we will be able to handle with $F_\sigma$ as if it were a continuous function.

% Its corollary (Corollary \ref{cor:decom}) states a consequence for $\graph^*(F_\sigma)$.

\begin{lem}[Generalized intermediate value theorem]\label{lem:tvi}
For any c\`adl\`ag function $F$, any $x_0,x_1\in \R$ and $h\in \R$ such that $x_0<x_1$ and  $(F(x_0)-h)(F(x_1)-h)<0$, there exists $x\in]x_0,x_1]$ such that $(x,h)\in \graph^*(F)$. 
\end{lem}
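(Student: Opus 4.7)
The plan is to reduce to a single case and then use the standard ``first-hitting time'' construction, exploiting càdlàg-ness precisely where continuity would be used in the classical IVT.

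First I would reduce to the case $F(x_0) < h < F(x_1)$. The reverse inequality $F(x_0) > h > F(x_1)$ follows by applying the result to $-F$ and $-h$ (the graph $\graph^*$ behaves symmetrically under this substitution, since $[\min,\max]$ is independent of orientation).

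Next, the natural candidate is
\[
x := \inf\{t \in [x_0, x_1] : F(t) \geq h\}.
\]
This set is nonempty (it contains $x_1$) so $x$ is well-defined, and because $F(x_0) < h$ the point $x_0$ does not belong to the set, which forces $x > x_0$; hence $x \in (x_0, x_1]$ as required.

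It remains to verify $(x,h) \in \graph^*(F)$, i.e.\ that $h$ lies between $F(x^-)$ and $F(x)$. For every $t \in [x_0, x)$ the definition of the infimum gives $F(t) < h$, and letting $t \nearrow x$ yields $F(x^-) \leq h$ (the left limit exists because $F$ is càdlàg). In the other direction, one chooses a sequence $t_n \to x$ with $t_n \geq x$ and $F(t_n) \geq h$; either some $t_n$ equals $x$, giving $F(x) \geq h$ directly, or $t_n \searrow x$ and right-continuity of $F$ delivers $F(x) = \lim_n F(t_n) \geq h$. Combining, $F(x^-) \leq h \leq F(x)$, so $h \in [F(x^-), F(x)] = F^*_\sigma(x)$ (recalling the convention $[a,b] = [\min(a,b), \max(a,b)]$ used throughout the paper).

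There is no real obstacle here; the only subtle point is making sure the jump is oriented in the right direction at the chosen $x$, and this is exactly what the WLOG reduction together with the definition of $x$ as an infimum enforces. The argument works verbatim whether the value $h$ is attained continuously by $F$ or only through a jump that straddles it.
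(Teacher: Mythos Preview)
Your proof is correct and follows essentially the same approach as the paper's: both define $x$ as the infimum of $\{t\in[x_0,x_1]:F(t)\geq h\}$ after reducing to the case $F(x_0)<h<F(x_1)$, then use right-continuity to get $F(x)\geq h$ and the left limit to get $F(x^-)\leq h$. The only cosmetic difference is that the paper sets $h=0$ and splits according to whether $F$ is left-continuous at $x$, whereas you handle both subcases uniformly; your write-up is arguably cleaner. (One tiny slip: you write $F^*_\sigma(x)$ at the end, but the lemma is stated for a generic c\`adl\`ag $F$, not $F_\sigma$.)
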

\begin{proof}
Let $F$, $x_0,x_1$ be as in the statement. Without loss of generality we assume $h=0$, $F(x_0)<0$ and $F(x_1)>0$. Let $x$ be the infimum of $A=\{x\in[x_0,x_1]:\,F(x)\geq 0\}$. As $A\ni x_1$ it is a not empty set. As moreover $F$ is right continuous we have $x\in A$ and $x\neq x_0$. Due to the definitions of $A$ and $x$, any $x'<x$ satisfies $F(x')<0$.  If $F$ is left-continuous at $x$ we have $F(x)=0$. If it is not, as $F(x-)<0$ we also have $(x,0)\in\graph(F^*)$.
\end{proof}

%****************** EFFACER DEBUT ***************************
%\begin{cor}\label{cor:decom}
%If for some $y\in \R$ the set of generalized solutions $x$ of $F_\sigma=h$ have finite cardinal, then we have the following partition
%\begin{align*}
%(\R\times \{h\})\cap\graph^*(F_\sigma)=&(\R\times \{h\})\cap\graph^{*,+}(F_\sigma)\\
%&\cup (\R\times \{h\})\cap\graph^{*,-}(F_\sigma)\\
%&\cup (\R\times \{h\})\cap\graph^{*,0}(F_\sigma).
%\end{align*}
%\end{cor}
%Note that this decomposition is not true for infinitely many generalized solutions because a function can oscillate in the neighborhood of a point.
%
%*************************** FIN ************************************

A result by Bertoin and Yor establishes a relation between the occupation measure in a set $B\subset \R$ of a function $F$ of finite variation and its variations when its values are in $B$, see Remark \ref{rem:BY} for details. In particular we can apply their Theorem 1 in \cite{BY} (see also their \S 5) to $F=F_\sigma$ and $B=\R$, and relate the total variation (without its saltus part) with the number of solutions of the equations $F_\sigma=h$, where $h$ goes over $\R$: For any points $s\leq t$ in $\overline{\R}$

\begin{align} \label{eq:bla}
TV_s^t(F_\sigma)-\sum_{x\in ]s,t]}|\sigma(x)|&=\int_\R\card\{x\in [s,t]:\,(x,h)\in\graph(F_\sigma)\}\,\d h\\ \notag
&=\int_\R\card[(\R\times \{h\})\cap\graph(F_\sigma)]\,\d h\\ \notag
&=\int_\R i_{]s,t]}(h)\,\d h
\end{align}
where
\begin{align*}
i_{A}:h\in\R\mapsto i_{A}(h)&=\card\{x\in A:\,F_\sigma(x)=h\}\\
&=\card\{x\in A:\,(x,h)\in \graph(F_\sigma)\}
\end{align*}
is the so-called Banach indicatrix, after \cite{Banach}. Notice that in \cite{BY} the result is stated for $s=0$ and $t\in [0,\infty[$. Our statement on a general interval $]s,t]$ is a trivial generalization. Another difference is that we only apply the formula for the occupation measure of $F_\sigma$ in $B$ when $B=\R$. It is hardly more than a simple exercise to rewrite \eqref{eq:bla} for generalized solutions of $F_\sigma=h$, which permits at the same time to forget about the saltus part. For this purpose we introduce the generalized Banach indicatrix. Let $s\leq t$ be in $\overline{\R}$.
\begin{align*}
i_{A}^*:h\in\R\mapsto i_A^*(h)&=\card\{x\in A:\,h\in F_\sigma^*(x)\}\\
&=\card\{x\in A:\,(x,h)\in \graph(F_\sigma^*)\}\in\N\cup\{\infty\}.
\end{align*}
Therefore \eqref{eq:bla} yields
\begin{align}\label{eq:*}
TV_s^t(F_\sigma)=\int i_{]s,t]}^*(h)\,\mathrm{d} h.
\end{align}
A consequence of Theorem 1 in \cite{BY} specifies that not all intersections with the graph need to be considered. Recall first that $\graph^+(F_\sigma^*)$ and $\graph^-(F_\sigma^*)$ have been defined in Section \ref{sec:defexc}. Bertoin and Yor proved that almost surely for $h$, the (generalized) Banach indicatrix $i^*_{]s,t]}$ equals $i^{*,\pm}_{]s,t]}:=i^{*,+}_{]s,t]}+i^{*,-}_{]s,t]}$ where
\begin{align*}
i_{]s,t]}^{*,+}&:h\in\R\mapsto i^+(h)=\card\{x\in \R:\,(x,h)\in \graph^+(F_\sigma^*)\}\in\N\cup\{\infty\},\\
i_{]s,t]}^{*,-}&:h\in\R\mapsto i^-(h)=\card\{x\in \R:\,(x,h)\in \graph^-(F_\sigma^*)\}\in\N\cup\{\infty\}
\end{align*}
(with obvious notation Bertoin and Yor in fact proved the equality $i=i^\pm$ for the non yet generalized indicatrix $i=i^++i^-$.)
This also comes from Theorem 1 in \cite{BY} (where $n^x(t)dx=\lambda^x(t)dx$):
\begin{align}\label{eq:*bis}
TV_s^t(F_\sigma)=\int i_{]s,t]}^{*,\pm}(h)\,\d h\leq \int i_{]s,t]}^{*}(h)\,\d h=TV_s^t(F_\sigma).
\end{align}
With the next result we go further in the analysis.
\begin{pro}\label{pro:altern}
For almost every $h\in \R$, the set $(\R\times \{h\})\cap \graph^*{F_\sigma}$ has cardinal a finite and even integer. In fact for almost every $h\in \R$ it holds $i^*(h)=i^{*,+}(h)+i^{*,-}(h)$ with $i^{*,+}(h)=i^{*,-}(h)$. 
Moreover if $h\neq 0$ the generalized solutions of $F_\sigma=h$ are alternatively crossing positively and negatively, starting with the most-left solution $(x,h)\in \graph^{*,+}(F_\sigma)$ if $h>0$ and $(x,h)\in \graph^{*,-}(F_\sigma)$ if $h<0$.
\end{pro}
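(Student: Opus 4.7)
The plan is to combine the Bertoin--Yor equality $i^*=i^{*,\pm}$ (which holds almost everywhere by \eqref{eq:*bis}) with a local sign analysis of $F_\sigma-h$ around its generalized solutions, and then to exploit the boundary behaviour $F_\sigma(\pm\infty)=0$ to force an alternating pattern. Since $F_\mu$ and $F_\nu$ are non-decreasing c\`adl\`ag functions valued in $[0,1]$, the difference $F_\sigma$ has total variation at most $2$, so \eqref{eq:*bis} with $s=-\infty$ and $t=+\infty$ yields $\int i^{*,\pm}(h)\,\d h\le 2$. Thus $i^{*,\pm}(h)$ is finite for almost every $h$, and the a.e.\ equality $i^*=i^{*,\pm}$ already cited from \cite{BY} furnishes both the finiteness of $i^*(h)$ and the absence of ``tangential'' generalized solutions.

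Next I would prove the local sign-change property. If $(x,h)\in\graph^{*,+}(F_\sigma)$ with neighbourhood $U_x$ as in the definition, then for every $x'\in U_x\setminus\{x\}$ the point $(x',F_\sigma(x'))$ lies in $\graph^*(F_\sigma)$, hence $(F_\sigma(x')-h)(x'-x)>0$; so $F_\sigma>h$ on $U_x\cap(x,+\infty)$ and $F_\sigma<h$ on $U_x\cap(-\infty,x)$, with a symmetric statement at decreasing points. Conversely, if $h\notin F^*_\sigma(x)=[\min(F_\sigma(x^-),F_\sigma(x)),\max(F_\sigma(x^-),F_\sigma(x))]$, then right-continuity at $x$ and the existence of a left-limit at $x$ imply that $F_\sigma-h$ has constant nonzero sign throughout a full neighbourhood of $x$; hence $\{x:h\notin F^*_\sigma(x)\}$ is open and $\mathrm{sgn}(F_\sigma-h)$ is locally constant on it, therefore constant on each of its (open) connected components.

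Now fix $h\ne 0$ in the full-measure set where $i^*(h)=i^{*,\pm}(h)<\infty$ and list the generalized solutions as $x_1<\cdots<x_N$, each of type $+$ or $-$. By the previous step the sign of $F_\sigma-h$ is constant on $(-\infty,x_1)$, on each $(x_i,x_{i+1})$, and on $(x_N,+\infty)$, and flips at every $x_i$ from $-$ to $+$ if $x_i$ is increasing and from $+$ to $-$ if decreasing. Because $F_\sigma\to 0$ at $\pm\infty$, the sign on the two unbounded intervals equals $-\mathrm{sgn}(h)$; this forces $x_1$ to be of type $+$ when $h>0$ and of type $-$ when $h<0$, forces the types to alternate thereafter, forces $N$ to be even, and yields $i^{*,+}(h)=i^{*,-}(h)=N/2$.

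The main obstacle, in my view, is securing off a null set of $h$ the absence of tangential generalized solutions where $F_\sigma-h$ fails to change sign; this is exactly what the Bertoin--Yor refinement $i^*=i^{*,\pm}$ provides, and without it both the alternation pattern and the evenness of $i^*(h)$ would break. Once that refinement is in hand, the remainder is a careful bookkeeping on the c\`adl\`ag sign-changes of $F_\sigma-h$.
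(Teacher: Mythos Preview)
Your proof is correct and follows essentially the same strategy as the paper: both use the Bertoin--Yor identity \eqref{eq:*bis} to obtain finiteness of $i^*$ and the a.e.\ equality $i^*=i^{*,\pm}$, then combine the boundary condition $F_\sigma(\pm\infty)=0$ with a sign-change analysis to force alternation and evenness. The only cosmetic difference is that the paper compresses the sign-change step into an invocation of the generalized intermediate value theorem (Lemma~\ref{lem:tvi}), whereas you unpack that mechanism explicitly via local constancy of $\mathrm{sgn}(F_\sigma-h)$ between consecutive generalized solutions.
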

\begin{proof}
Due to \eqref{eq:*} the generalized Banach indicatrix is finite for almost every $h \in \R$ and with \eqref{eq:*bis} for almost every $h$ we have $(\R\times \{h\})\cap \graph^*{F_\sigma}=[(\R\times \{h\})\cap \graph^{*,+}{F_\sigma}] \cup [(\R\times \{h\})\cap \graph^{*,-}{F_\sigma}]$. With the generalized intermediate value theorem (Lemma \ref{lem:tvi}) and as $\lim_\infty F_\sigma=\lim_{-\infty} F_\sigma$ we conclude that $i^*(y)$ is an even number for almost every $y$. 
  
%Therefore we have the following partition
%\begin{align*}
%(\R\times \{h\})\cap\graph^*(F_\sigma)=&(\R\times \{h\})\cap\graph^{*,+}(F_\sigma)\\
%&\cup (\R\times \{h\})\cap\graph^{*,-}(F_\sigma)\\
%&\cup (\R\times \{h\})\cap\graph^{*,0}(F_\sigma)
%\end{align*}
%where $\int \card(x\in \R:\, (x,y)\in \graph^{*,0}(F_\sigma))\, \d y=0$ and
%where for almost every $y\in \R$, $\card(x\in \R:\, (x,y)\in \graph^{*,+}(F_\sigma))=\card(x\in \R:\, (x,y)\in \graph^{*,-}(F_\sigma))=i^\pm(y)/2$. 

More precisely, due to the generalized intermediate value theorem applied to $F_\sigma$ and reminding that this function has limit zero in $\-\infty$ and $+\infty$ the points $x_1<x'_1<x_2<\cdots <x'_{n-1}<x_n<x'_n$  of $\{x\in \R:\, (x,y)\in \graph^{*}(F_\sigma)\}$ are ordered with $x_1,\ldots,x_n\in\graph^{*,+}(F_\sigma)$ and $x'_1,\ldots,x'_n\in\graph^{*,-}(F_\sigma)$ if $h>0$ , $x_1,\ldots,x_n\in\graph^{*,-}(F_\sigma)$ and $x'_1,\ldots,x'_n\in\graph^{*,+}(F_\sigma)$ if $h<0$.
\end{proof}

%************************************************************definition of "clean level" ***
%\begin{defi}
%We call $h\neq 0$ a clean level or a simple level if $(\R\times \{h\})\cap\graph^*(F_\sigma)$ is finite with all its elements alternating between $(\R\times \{h\})\cap\graph^{*,+}(F_\sigma)$ and $(\R\times \{h\})\cap\graph^{*,-}(F_\sigma)$ as above.
%\end{defi}
%
%********************************************************************************************

Coming back to \eqref{eq:*}, another direct generalization of Bertoin and Yor's study is
\begin{align}
F_\sigma(t)-F_\sigma(s)=&\int_\R i_{]s,t]}^{*,+}-i_{]s,t]}^{*,-}\,\d h. \label{eq:**}
\end{align}
It will be useful for recovering $F_\mu$ and $F_\nu$ from $F_\sigma$.

For any measurable set $G\subset \R^2$, let $\zeta_G$ denote the following positive measure
\[\zeta_G:E\in \mathcal{T}(\R^2)\mapsto \zeta_G(E)=\int_{-\infty}^{+\infty}\card(x\in \R:\,(x,h)\in E\cap G)\,\d h\]
We consider in particular $G=\graph^*(F_\sigma)$ and $G_+=\graph^{*,+}(F_\sigma)$ and $G_-=\graph^{*,-}(F_\sigma)$ and call $\zeta$, $\zeta_+$ and $\zeta_-$ the corresponding measures. As a consequence of Proposition \ref{pro:altern} and of the concerned definitions we can already state $\zeta=\zeta_+ + \zeta_-$ and $\mathrm{proj}^2_\#\zeta_+=\mathrm{proj}^2_\#\zeta_-=i_{\R}^*(h)/2\,\d h$. The next result indicates the other projections.

 \begin{pro}
The measures $\zeta_+$ and $\zeta_-$ defined as in the previous paragraph
satisfy $\mathrm{proj}^1_\#\zeta_+=\mu$ and $\mathrm{proj}^1_\#\zeta_-=\nu$.
 \end{pro}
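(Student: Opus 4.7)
My plan is to combine the two Bertoin--Yor-type identities already established in the paper, namely the total variation formula
\[TV_s^t(F_\sigma)=\int_\R \bigl(i_{]s,t]}^{*,+}(h)+i_{]s,t]}^{*,-}(h)\bigr)\,\d h\]
coming from \eqref{eq:*bis} and the signed identity \eqref{eq:**}
\[F_\sigma(t)-F_\sigma(s)=\int_\R \bigl(i_{]s,t]}^{*,+}(h)-i_{]s,t]}^{*,-}(h)\bigr)\,\d h,\]
and then to add and subtract them. To turn this into a statement about $\mu$ and $\nu$ I will first observe that the singularity hypothesis $\mu\perp\nu$ is exactly the condition that makes the writing $\sigma=\mu-\nu$ the Jordan decomposition of $\sigma$. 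In particular the total variation measure $|\sigma|$ is nothing but $\mu+\nu$, and consequently the total variation of its cumulative distribution function on any half-open interval satisfies
\[TV_s^t(F_\sigma)=|\sigma|(]s,t])=\mu(]s,t])+\nu(]s,t]).\]
On the other hand, by definition of $F_\sigma$ one has $F_\sigma(t)-F_\sigma(s)=\sigma(]s,t])=\mu(]s,t])-\nu(]s,t])$, with no need to worry about the saltus part since the working graph is the generalized $\graph^*$.

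Combining these two facts with the two integral identities above and solving the resulting linear system for $\int i^{*,+}_{]s,t]}$ and $\int i^{*,-}_{]s,t]}$ yields
\[\int_\R i_{]s,t]}^{*,+}(h)\,\d h=\mu(]s,t])\quad\text{and}\quad \int_\R i_{]s,t]}^{*,-}(h)\,\d h=\nu(]s,t]).\]
By the definitions of $\zeta_+$ and $\zeta_-$, the left-hand sides are precisely $\zeta_+(]s,t]\times\R)=\mathrm{proj}^1_\#\zeta_+(]s,t])$ and $\mathrm{proj}^1_\#\zeta_-(]s,t])$. Since the half-open intervals $]s,t]$ form a $\pi$-system that generates the Borel $\sigma$-algebra of $\R$, the uniqueness part of the Carath\'eodory extension theorem forces $\mathrm{proj}^1_\#\zeta_+=\mu$ and $\mathrm{proj}^1_\#\zeta_-=\nu$.

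The only step that needs a careful argument is the identification $|\sigma|=\mu+\nu$, which is really where the singularity assumption is used: as soon as $\mu\perp\nu$, any Hahn decomposition of $\R$ for $\sigma$ separates the supports of $\mu$ and $\nu$, and the positive and negative parts of $\sigma$ in the Jordan decomposition coincide respectively with $\mu$ and $\nu$. All the other steps are essentially bookkeeping once the two Bertoin--Yor identities from the previous pages are in hand, and no additional regularity of $F_\sigma$ is required beyond the c\`adl\`ag property already exploited through the generalized graph $\graph^*(F_\sigma)$.
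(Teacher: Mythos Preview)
Your argument is correct and follows essentially the same route as the paper: combine the total-variation identity \eqref{eq:*bis} with the signed identity \eqref{eq:**}, take half-sum and half-difference, and then use $\mu\perp\nu$ to identify the two resulting integrals with $\mu(]s,t])$ and $\nu(]s,t])$. The only difference is in the last step: you invoke the standard fact that $\mu\perp\nu$ makes $\sigma=\mu-\nu$ the Jordan decomposition, so that $|\sigma|=\mu+\nu$ and hence $TV_s^t(F_\sigma)=|\sigma|(]s,t])=\mu(]s,t])+\nu(]s,t])$, whereas the paper reproves this identity by hand via an outer-regularity approximation argument (producing finite unions of half-open intervals on which $\mu$ is almost full and $\nu$ almost null, to bound $(TV_+)_s^t(F_\sigma)$ from below by $\mu(]s,t])$, and symmetrically for $\nu$). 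Your version is cleaner; the paper's is more self-contained.
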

 \begin{proof} From \eqref{eq:*} and \eqref{eq:**}, computing the half sum and the half difference we already have
\begin{align}
(TV_+)_s^t(F_\sigma)=\int_\R i^{*,+}_{]s,t]}(h)\,\d h\quad \text{and}\quad(TV_-)_s^t(F_\sigma)=\int_\R i^{*,-}_{]s,t]}(h)\,\d h.
\end{align}
Therefore,
\begin{align}
(TV_+)_s^t(F_\sigma)&=\int_\R\card(x\in ]s,t]:\, (x,h)\in \graph^+(F^*_\sigma))\,\d h\\
&=\int_{-\infty}^{+\infty}\card(x\in \R:\,(x,h)\in (]s,t]\times \R)\cap \graph^+(F^*_\sigma))\,\d h\\
&=\mathrm{proj}^1_\#\zeta_+(]s,t])
\end{align}
and $(TV_-)_s^t(F_\sigma)=\mathrm{proj}^1_\#\zeta_-(]s,t])$. Recall moreover the definitions

\begin{align}
\left\{
\begin{aligned}
TV(F)_s^t&=\sup_{m\in \N,\, s=r_0<\ldots<r_{m+1}=t}\sum_{j=0}^m|F(r_{k+1}-F(r_k)|\\
(TV_+)_s^t(F)&=\sup_{m\in \N,\,s=r_0<\ldots<r_{m+1}=t}\sum_{j=0}^m[F(r_{k+1})-F(r_k)]^+\\
(TV_-)_s^t(F)&=\sup_{m\in \N,\,s=r_0<\ldots<r_{m+1}=t}\sum_{j=0}^m[F(r_{k+1})-F(r_k)]^-
\end{aligned}
\right.
\end{align}
where $[u]^+=\frac12(|u|+u)$ and $[u]^-=\frac12(|u|-u)$ are the positive and negative parts of $u$. As $\mu \perp \nu$, there exists $P$ with $\mu(P)=\mu(\R)$ and $\nu(P)=0$. By outer regularity for every $\eps$ there exists an open set $U\supset P$ such that  $|\mu(P)-\mu(\R)|=0$ and $\nu(U)<\eps/2$. 

Finally from the $\sigma$-additivity of measures and the fact $\mu(]a,b-1/n])\to \mu(]a,b[)$ for every $a<b$ there exists a finite union $J=\bigcup_i]a_i,b_i]\subset U$ of semi open intervals such that $\nu(J)<\eps$ and $|\mu(J)-\mu(\R)|<\eps$. The partition of $\R$ given by $J$ permits us to check that the positive total variation of $F_\sigma$ is greater than $\sum [\sigma(]a_i,b_i])]^+$ which is greater than $\sum_i \sigma(]a_i,b_i])=\mu(J)-\nu(J)\geq \mu(\R)-2\eps$. This holds for every $\eps\geq 0$ so that $TV_+(F_\sigma)\geq TV(F_\mu)=\mu(\R)$. Symmetrically $TV_+(F_\sigma)\geq TV(F_\nu)=\nu(\R)$. Finally as $TV_+(F_\sigma)+TV_-(F_\sigma)=TV(F_\sigma)=TV(F_\mu-F_\nu)\leq TV(F_{\mu})+TV(F_{\nu})= (\mu+\nu)(\R)=|\sigma|(\R)$ we conclude with $TV_+(F_\sigma)= TV(F_\mu)$ and $TV_-(F_\sigma)= TV(F_\nu)$. The same identity is correct on $]s,t]$ with the same proof (for instance $(TV_+)_s^t(F_\sigma)= (TV)_s^t(F_\mu)=\mu(]s,t])=\mathrm{proj}^1_\#\zeta_+(]s,t]$).
%The total variation of $F_\sigma$ is nothing but $\mu_0(\R)+\nu_0(\R)$ with the notation of the canonical decomposition in \ref{}.
\end{proof}

%£*NJNJNJ* La d\'emo qui est avant devrait \^etre int\'egr\'ee dans un \'enonc\'e. *NJNJNJ*§

\begin{proof}[Proof of Theorem \ref{thm:construction}]
We proved that $i_\R$ is almost surely an even integer and $\int_{-\infty}^{+\infty}i_\R(h)\,\d h=TV(F_\sigma)=TV(F_\mu)+TV(F_\nu)=2$. Due to Proposition \ref{pro:altern} $(X,H)$ is concentrated on $\graph^+(F^*_\sigma)$ and is distributed as $\zeta_+$. Similarly $(Y,H)$ is concentrated on $\graph^-(F^*_\sigma)$ and distributed as $\zeta_-$. Finally the law of $X$ is $\mu$ and the law of $Y$ is $\nu$.
\end{proof}

\subsection{Proof that a transport plan concentrated on a monotone set is the excursion coupling}
In this subsection we call monotone a set $S$ with arches that do not cross, do not connect and have same orientation when they are nested. In what follows we prove $\emph{3}\Rightarrow \emph{4}$ of the Main Theorem, i.e. that measures concentrated on a monotone set are the excursion coupling of their marginals (this is correct even though these measures do not have finite first moment). We prove this first in the case $\mu\perp \nu$ and prove the general case on page \pageref{par:general}.

\paragraph{Proof of $\emph{3}\Rightarrow \emph{4}$ for measures $\mu\perp\nu$}

The generalized $\graph^*{(F_\sigma)}$, $\pi$ and the related objects are still defined as above and we still assume $\mu\perp \nu$. We define $\Gamma$ (that depends on $\sigma$) as
\begin{align}\label{eq:gamma}
\Gamma=\bigcup_{h>0, h\in C}\bigcup^{i^*_\R(h)/2}_{k=1}\{(x^h_{2k-1},x^h_{2k})\}\cup\bigcup_{h<0, h\in C}\bigcup^{i^*_\R(h)/2}_{k=1}\{(x^h_{2k},x^h_{2k-1})\}
\end{align}
where $C$ is the set of levels $h$ such that $i^*_\R(h)=i^{*,\pm}_\R(h)$ and $x^h_1<x^h_2<\cdots <x^h_{i^*_\R-1}<x^h_{i^*_\R}$  are the points of $\{x\in \R:\, (x,h)\in \graph^{*}(F_\sigma)\}$. We stated in Proposition \ref{pro:altern} that $C$ has full measure with respect to $i_\R^*(\d h)$. Therefore, with respect to the definition of the excursion coupling, we have $\pi(\Gamma)=1$. 

\begin{rem}\label{rem:3-4}
We could prove that $\Gamma$ is monotone (the arches of $\Gamma$ do not cross, do not connect and have the same orientation when they are nested). Since $\pi(\Gamma)=1$, this would correspond to the implication $\emph{4}\Rightarrow \emph{3}$. This is correct and can be proved directly but our proof of the Main Theorem goes $\emph{4}\Rightarrow \emph{1}\Rightarrow \emph{2}\Rightarrow \emph{3}$.
\end{rem}

\begin{figure}\label{fig:arches}
\begin{center}
   \def\svgwidth{12cm}
   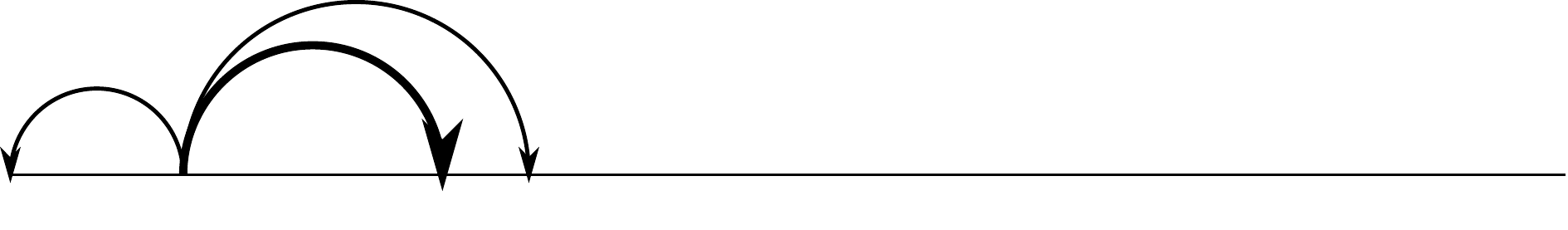
\caption{Transport routes inside and outside $]x,y[$ and at points $x$ and $y$}\label{fig:arches}
\end{center}
\end{figure}

%£*NJ* Monotone est seulement semi defini. Refaire le dessin *NJ*§

\begin{pro}
Let $\mu$ and $\nu$ be mutually singular measures of $\p_1(\R)$ and $\gamma$ be a monotone transport plan in $\ma(\mu,\nu)$. Let $S$ be a monotone set with $\gamma(S)=1$. Then $\Gamma\cap S$ is still monotone and satisfies $\gamma(\Gamma\cap S)=1$.
\end{pro}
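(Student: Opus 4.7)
The monotonicity of $\Gamma \cap S$ is automatic: the three conditions in Definition \ref{def:arches} are universally quantified over pairs in the set, hence hereditary under taking subsets. Since $\gamma(S)=1$ the claim $\gamma(\Gamma \cap S)=1$ reduces to showing $\gamma(\Gamma)=1$.

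To reach that, the plan is to prove that $\gamma$-a.e.\ route $(x,y)$ already lies in $\Gamma$. First, since $S$'s arches do not connect and $\mu \perp \nu$ gives $\mu \wedge \nu = 0$, Lemma \ref{lem:coincide} applied to $\gamma$ yields $\gamma(\Delta)=0$, so we may restrict to $(x,y) \in S$ with $x \neq y$; assume WLOG $x<y$. The goal is then to exhibit a level $h$ at which $x$ and $y$ are two consecutive generalized solutions of $F_\sigma = h$, with the orientation demanded by \eqref{eq:gamma}. The measure-zero set $C^c$ of ``bad'' levels is $\gamma$-negligible once the level map is established, since the pushforward of $\gamma$ to levels is absolutely continuous with respect to $i_\R^*(\d h)$.

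To locate the level I would exploit non-crossing: every $\gamma$-route with source in $]x,y[$ is forced to have target in $[x,y]$, and symmetrically; while $\mu \perp \nu$ yields $\nu(\{x\})=0=\mu(\{y\})$, eliminating routes from $\{y\}$ leftward and to $\{x\}$ rightward. A marginal comparison then gives
\[ d \;:=\; \mu(]x,y[) - \nu(]x,y[) \;=\; \gamma(]x,y[\times\{y\}) - \gamma(\{x\}\times]x,y[) \;\in\; [-\mu(\{x\}),\,\nu(\{y\})]. \]
Since $F_\sigma(y^-)-F_\sigma(x)=d$ and the generalized images are $F_\sigma^*(x)=[F_\sigma(x)-\mu(\{x\}),F_\sigma(x)]$, $F_\sigma^*(y)=[F_\sigma(y^-)-\nu(\{y\}),F_\sigma(y^-)]$, the bound on $d$ forces $F_\sigma^*(x)\cap F_\sigma^*(y)\neq\emptyset$; fix $h$ in this intersection. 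The orientation statement $(x,h)\in\graph^{*,+}(F_\sigma)$, $(y,h)\in\graph^{*,-}(F_\sigma)$ (and hence $h>0$ via Proposition \ref{pro:altern}) follows for $\gamma$-a.e.\ $(x,y)$ from the fact that $x$ lies in $\mathrm{supp}(\mu)$ and $y$ in $\mathrm{supp}(\nu)$, which are disjoint.

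The delicate remaining step is \emph{consecutiveness}: no $z \in \,]x,y[$ should satisfy $(z,h)\in\graph^*(F_\sigma)$. The plan is to argue by contradiction: any such $z$ would lie in $\mathrm{supp}(\mu)\cup\mathrm{supp}(\nu)$, so a localised version of the balance argument above would produce $\gamma$-mass on an arch strictly nested inside $(x,y)$; nested-same-orientation then forces this sub-arch to point rightwards, and re-running the level computation on $[x,z]$ and $[z,y]$ would re-exhibit $d$ as a sum of two quantities of the same sign—one of which must be strictly smaller in absolute value than the corresponding $\max(\mu(\{\cdot\}),\nu(\{\cdot\}))$ bound—contradicting the tightness just established. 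This careful use of nested-same-orientation to exclude interior generalized solutions, without relying on the very characterization we are trying to prove, is the main obstacle; once it is in place, $(x,y)\in\Gamma$ follows immediately and the conclusion $\gamma(\Gamma)=1$ is obtained.
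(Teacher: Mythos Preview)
Your computation locating a common level $h\in F_\sigma^*(x)\cap F_\sigma^*(y)$ via the bound $d\in[-\mu(\{x\}),\nu(\{y\})]$ is correct, and so is the almost-everywhere vanishing $\nu(\{x\})=\mu(\{y\})=0$. But the next two steps fail. The assertion that $\mathrm{supp}(\mu)$ and $\mathrm{supp}(\nu)$ are disjoint is simply false under $\mu\perp\nu$ (take $\mu$ Lebesgue and $\nu$ a Cantor measure on $[0,1]$), and even granting disjointness, $x\in\mathrm{supp}(\mu)$ does not force $(x,h)\in\graph^{*,+}(F_\sigma)$: at a local maximum of $F_\sigma$ the point is in neither $\graph^{*,+}$ nor $\graph^{*,-}$. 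Your second claim, that the pushforward of $\gamma$ along the level map is absolutely continuous with respect to $i^*_\R(h)\,\d h$ (so that $h\in C$ almost surely), is precisely what has to be proved and cannot be invoked.

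The paper sidesteps both issues by a device you do not use: before analysing a route $(x,y)\in S$ it first restricts to $x\in A:=\pr^1(\Gamma)$ and $y\in B:=\pr^2(\Gamma)$. This is legitimate because the already-constructed excursion coupling $\pi$ satisfies $\pi(\Gamma)=1$, whence $\mu(A)=\nu(B)=1$. For $x\in A$ with no atom, $F_\sigma^*(x)$ is the singleton $\{h\}=\{F_\sigma(x)\}$, and membership in $A$ gives $h\in C$ and $(x,h)\in\graph^{*,+}(F_\sigma)$ directly from the definition of $\Gamma$. Consecutiveness then follows cleanly: monotonicity of $S$ yields $F_\sigma(y')\ge h$ on $]x,y[$, and $h\in C$ rules out equality since levels in $C$ are only crossed, never merely touched. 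The atomic case needs a further pruning $S_3\subsetneq S_2$ and a separate argument. Your consecutiveness sketch never uses $h\in C$ and, as you concede, is unfinished; without that ingredient tangential interior solutions cannot be excluded.
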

\begin{proof}
It is not a priori known that $\gamma(\Gamma)=1$ and this statement is in fact clearly equivalent to the proposition result. Let $S$ be as in the statement. We will define $S\supset S_1\supset S_2\supset S_3$ such that $S_k\setminus S_{k-1}$ has measure zero for $\gamma$ and $S_3\subset \Gamma$. Hence we will have $\gamma(\Gamma)\geq \gamma(\Gamma \cap S)\geq\gamma(S_3 \cap S)=\gamma(S)=1$.

Let $\gamma$ and $S$ be as in the statement. Let $\Delta$ be the diagonal $\{(x,y)\in \R^2:\,x=y\}$ and $A=\pr^1(\Gamma)$ and $B=\pr^2(\Gamma)$. Let $S_1$ be  $S \cap (\R^2\setminus \Delta)$ and $S_2=S_1\cap (A\times \R)\cap (\R\times B)$. Finally $S_3=S_2\setminus \{(x,y)\in \R^2:\,\gamma(x,y)=0,\mu(x)>0,\nu(y)>0\}$.

\emph{Proof of $\gamma(S_3)=\gamma(S)$}. We already recalled in Lemma \ref{lem:coincide} that transport plans $\gamma\in \ma(\mu,\nu)$, whose arches do not connect takes the form $\gamma=(\id\times \id)_\#(\mu\wedge \nu)+\gamma_0$ where $\gamma_0(\Delta)=0$. As $\mu\perp \nu$ we have $\gamma(\Delta)=0$ and $\gamma(S_1)=\gamma(S)$. We have $\gamma(A\times \R)=\mu(A)=\pi(\Gamma)=1$. Similarly, $\gamma(\R\times B)=1$ so that $\gamma(S_2)=\gamma(S_1)=1$. Finally there are countably many pairs $(x,y)$ with $\mu(x)>0$ and $\nu(y)>0$. Thus $\gamma(\{(x,y)\in \R^2:\,\gamma(x,y)=0,\mu(x)>0,\nu(y)>0\})=0$ and $\gamma(S_3)=\gamma(S_2)$=1.

\emph{Proof of $S_3\subset \Gamma$.} Let $(x,y)$ be in $S_3$. Without loss of generality we can assume $x<y$ ($x=y$ became impossible as $S$ was replaced by $S_1$). Let us first prove
\begin{align}\label{eq:bla1}
\gamma((\R\setminus [x,y])\times ]x,y[)=0\qquad\text{and}\qquad \gamma(]x,y[\times (\R\setminus [x,y]))=0.
\end{align}
 If not there exists $(x',y')\in \spt(\gamma)$ with $x'\notin [x,y]$ and $y'\in ]x,y[$ (or the same property inverting the role of $x'$ and $y'$). As $S$ is dense in $\spt(\gamma)$ the same is correct for a point $(x'',y'')\in S$, which leads to a contradiction with the monotonicity of $S$, whose arches should not cross. 
 
 \emph{Case 1:} Assume $\mu(x)=\nu(y)=0$ ; the complementary case is considered further in case 2. Then $F_\sigma(y)-F_\sigma(x)=\mu([x,y])-\nu([x,y])=\gamma([x,y]\times \R)-\gamma(\R\times [x,y])=\gamma([x,y]^2)-\gamma([x,y]^2)=0$. Moreover, a similar argument as for \eqref{eq:bla1} shows that for any  $y'\in ]x,y]$ we have 
 \begin{align}\label{eq:bla2}
\nu(]x,y'])=\gamma([x,y']\times ]x,y'])\leq \mu(]x,y']).
 \end{align}
In fact $\gamma((\R\setminus [x,y'])\times ]x,y'])\neq 0$ would imply that there exists some $(x'',y'')\in S$ with $y''\in ]x,y']\subset ]x,y[$ and $x''\notin [x,y']$. If $x''<x$ this contradicts the fact that arches $(x,y)$ and $(x'',y'')$ do not cross. If $x''>y'$ the latter fact or the one that nested arches have the same orientation is violated. From \eqref{eq:bla2} we find $F_\sigma(y')\geq F_\sigma(x)=F_\sigma(y)$ for every $y'\in ]x,y[$. As $\mu(x)=0$ and $\nu(x)=0$ ($\nu(x)>0$ is not possible because $x$ is in $A$ so that it can not be an abscise of the decreasing part of $\graph^-(F_\sigma)$) the multivalued function $F_\sigma^{*}$ is single valued at $x$. Hence, from the definition of $A$ involved in $S_2$ it follows that $h=F_\sigma(x)$ must be an element of $C$. Therefore, since the level $h$ cuts $\graph(F_\sigma^*)$ in points of $\graph^+(F_\sigma^*)$ or $\graph^-(F_\sigma^*)$ it is not possible to have $F_\sigma(x)=F_\sigma(y')=F_\sigma(y)$ for $y'\in ]x,y[$ (In a neighborhood of $y'$ we would have $F_\sigma\geq h$). It follows that $x$ and $y$ are consecutive zeros of $F^*_\sigma=h$. Thus $(x,y)\in \Gamma$.
 
\emph{Case 2:} We want to finalize the inclusion $S_3\subset \Gamma$ looking at the pairs $(x,y)\in S_3\subset S$ where $x$ or $y$ is an atom of $\sigma$. Since the arches of $S$ do not cross $(x,y)$ at least one of the two is true: \emph{i)} $]-\infty,x[\times \{y\}$ has empty intersection with $S$), or \emph{ii)} $\{x\}\times]y,\infty[$ has empty intersection with $S$ (recall that here $x<y$), i.e the arches of the left and right patterns of Figure \ref{fig:arches} can not all be in $S$. Without loss of generality we will assume that $]-\infty,x[\times \{y\}$ is empty. This corresponds to the two first patterns from on Figure \ref{fig:arches}. Adapting the argument of case 1 we find $F_\sigma(y)-F_\sigma(x)=\mu(]x,y])-\nu(]x,y])\leq 0$ and $F_\sigma(y)-F_\sigma(x^-)=\mu([x,y])-\nu([x,y])\geq 0$ in place of $F_\sigma(y)-F_\sigma(x)=0$. Thus $F_\sigma(y)$ is in $[F_\sigma(x^-),F_\sigma(x)]$. Let $x'$ be in $]x,y]$. We have $F_\sigma(y)-F_\sigma(x')=\mu(]x',y])-\nu(]x',y])\leq 0$. This is due to the fact that the arches starting from $]x',y]$ have the same orientation as $(x,y)$ and do not cross it. Moreover, for $\nu$ some additional mass in $y$ could arrive from $]y,+\infty[$. We want to prove that $F_\sigma(y)-F_\sigma(x')< 0$ is also true for any $x'\in ]x,y[$. If $y$ is not an atom we can proceed as before starting with $F_\sigma(y)\in C$ and the arche $(x,y)$ with $F_\sigma^*(x)\ni F_\sigma(y)$. Therefore we assume that $y$ is an atom of $\nu$. If $F_\sigma(y)\in ]F_\sigma(x^-),F_\sigma(x)[$ we will be able to conclude easily that there exists $h\in C\cap ]F_\sigma(x^-),F_\sigma(x)[\cap ]F_\sigma(y^-),F_\sigma(y)[$ and we conclude as we did twice before (on $[x,y]$ the generalized function $F_\sigma^*$ can not only touch the level $h$ but it must cut it, which is not possible because $(x,y)\in \Gamma$ for the level $h$). In the other case $F_\sigma(x^-)<F_\sigma(x)=F_\sigma(y)$ and all the mass arriving in $y$ comes from $]x,y]$. Therefore $(x,y)$ may be an element of $S_2$ but not of $S_3$, a contradiction. This case can not happen and we proved $(x,y)\in \Gamma$ in all the other cases. Finally we proved $S_3\subset \Gamma$.

% $\nu(y)=\pi(]-\infty,x]\cup]y,+\infty[\times \{y\})\leq F_\mu(x)$. $F_\sigma(y)=\lim F_\sigma(y')-\nu(y)\geq F_\sigma(x)-F_\mu(x)=-F_\nu(x)$.
% 
%Moreover the values of $F_\sigma$ are greater than or equal to any $h$ in this set. Without this assumption $\pi([(\R\setminus [x,y])\times ]x,y[]\cup []x,y[\times (\R\setminus [x,y])])=0$ only implies $F_\sigma^*(x)\cap F_\sigma^*(y)=\emptyset$.

%As $\zeta$ is a transport plan between $\mu$ and $\nu$ and $\mu\perp \nu$ we can not have $\zeta(\Delta)>0$ because $\eta=\pr^x(\zeta\lfloor_\Delta)$ would satisfy $\eta\leq\mu$ and $\eta\leq \nu$.

%The set $\Gamma_1$ satisfies \eqref{iv} (prove that if $x$ is not an atom and $(x,y)\in \Gamma$, then $F_\sigma(x)=F_\sigma(y)$ and $F_\sigma$ is above on $]x,y[$). The fact that \eqref{iv} is correct for $\Gamma$ makes \eqref{iv} correct for $\Gamma_1$. The problem with $\Gamma_1$ is $x=y$ but $\Gamma_2$ satisfies \eqref{ii}.
\end{proof}

\begin{pro}
Let $\mu$ and $\nu$ be singular measures and $\pi\in\ma(\mu,\nu)$ the excursion coupling and $\Gamma$ as defined in \eqref{eq:gamma}. Let $\pi'\in \ma(\mu,\nu)$ be another coupling concentrated on $\Gamma$. Then $\pi'=\pi$.
\end{pro}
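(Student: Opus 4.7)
The plan is to pin down $\pi'$ through two successive disintegrations, one against each marginal, exploiting the fact that $\Gamma$ is essentially a function graph in each variable away from the atoms of $\mu$ and $\nu$.

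First I would disintegrate $\pi'$ against its first marginal $\mu$: $\pi'=\int \pi'_x\,d\mu(x)$ with $\pi'_x$ a probability supported on the fiber $\Gamma_x=\{y:(x,y)\in\Gamma\}$. The key step is to show that $\Gamma_x$ is a singleton for $\mu$-almost every $x$ with $\mu(\{x\})=0$. By the pairing rule of Theorem \ref{thm:construction}, first coordinates of $\Gamma$-arches lie in $\pr^1(\graph^{*,+}(F_\sigma))$. Since arches do not connect and $\mu\perp \nu$, the sets $A=\pr^1(\Gamma)$ and $B=\pr^2(\Gamma)$ are disjoint, so at $x\in A$ the function $F_\sigma$ cannot have a downward jump; if moreover $\mu(\{x\})=0$ it cannot have an upward jump either, and hence $F_\sigma^*(x)=\{F_\sigma(x)\}$ is single-valued. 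Proposition \ref{pro:altern} then identifies $x$ as a crossing of uniquely determined rank at the level $h=F_\sigma(x)$, so its $\Gamma$-partner $y(x)$ is unique and $\pi'_x=\delta_{y(x)}=\pi_x$. The symmetric disintegration of $\pi'$ against $\nu$ gives the analogous conclusion at $\nu$-a.e.\ $y$ with $\nu(\{y\})=0$. Writing $D_\mu$ and $D_\nu$ for the two countable atom sets, it follows that $\pi$ and $\pi'$ coincide on $\Gamma\setminus(D_\mu\times D_\nu)$.

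It remains to determine $\pi'$ on the countable set $G:=\Gamma\cap(D_\mu\times D_\nu)$. From the preceding agreement, for each $x_0\in D_\mu$ and each $y_0\in D_\nu$ the row sum $\pi'(\{x_0\}\times D_\nu)$ and column sum $\pi'(D_\mu\times\{y_0\})$ are already fixed (they equal those of $\pi$, by subtraction from $\mu(\{x_0\})$ and $\nu(\{y_0\})$), so the remaining problem is the uniqueness of a transportation on the countable bipartite graph $G$ with prescribed row and column marginals. The hardest step will be to show that $G$ contains no cycles. I plan to deduce this from Definition \ref{def:arches}: a 4-cycle $(x_1,y_1),(x_1,y_2),(x_2,y_1),(x_2,y_2)$ in $G$ with distinct endpoints would force two of its arches to have intervals that overlap without one being contained in the other, violating the non-crossing condition; a similar case analysis (using also that nested arches have the same orientation) rules out longer cycles. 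Once $G$ is known to be a forest, uniqueness of flows with prescribed marginals follows by iteratively pruning leaves, and since the same argument applied to $\pi$ yields the same flow, $\pi=\pi'$ on $G$ as well.
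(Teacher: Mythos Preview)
Your first step---showing that $\Gamma$ is a function graph over the non-atomic parts of $\mu$ and $\nu$, so that any two couplings concentrated on $\Gamma$ agree outside $D_\mu\times D_\nu$---is exactly what the paper does. The divergence is in how you handle the atomic part.

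The paper does not argue via the bipartite graph $G=\Gamma\cap(D_\mu\times D_\nu)$. Instead it computes $\pi'(\{(a,b)\})$ directly from the level-set description of $\Gamma$: for $(a,b)\in\Gamma$ with $a<b$, any coupling concentrated on $\Gamma$ must send $[a,b[$ into $]a,b]$ and $]a,b[$ into $]a,b[$, which forces $\pi'([a,b[\times\{b\})=\mu([a,b[)-\nu(]a,b[)$; a supremum over the $a'\in{]a,b[}$ with $(a',b)\in\Gamma$ then pins down $\pi'(]a,b[\times\{b\})$ and hence $\pi'(\{(a,b)\})$. This uses only the definition of $\Gamma$ and the generalized intermediate value theorem, and never appeals to monotonicity of $\Gamma$.

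Your forest argument, by contrast, leans on the arch properties of Definition~\ref{def:arches} for $\Gamma$ itself. At this point in the paper these are \emph{not} available: Remark~\ref{rem:3-4} explicitly says that ``$\Gamma$ is monotone'' can be proved directly but the chosen route is $4\Rightarrow 1\Rightarrow 3$, so invoking it inside the proof of $3\Rightarrow 4$ would require you to supply that lemma first. Beyond this logical dependency, there are two technical soft spots. First, your claim that a $4$-cycle always produces properly overlapping intervals is not quite right: in the configuration $x_1<y_1<x_2$, $y_2<x_1$ (and variants), the four intervals can be pairwise nested or disjoint, and the contradiction comes instead from opposite orientations of nested arches---you mention the orientation condition only for longer cycles. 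Second, ``iteratively pruning leaves'' presupposes that leaves exist, which can fail in an infinite forest (think of a bi-infinite path); the correct statement is that on a forest every edge is a cut and its flow is determined by the marginal imbalance across the cut, but that is a different argument from the one you sketch. None of these gaps is fatal---the forest approach can be made to work---but as written it is a plan with real holes, whereas the paper's direct computation sidesteps all of them.
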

\begin{proof} 
Let $A$ be the set of atomic points of $\mu$ and $B$ the set of atomic points of $\nu$. From the definition of $\Gamma$ we see that the set $\Gamma\setminus (A\times \R)$ is contained in the graph of a function from $\R\setminus A$ to $\R$. The same is true, inverting the coordinates for $\Gamma\setminus (\R \times B)$. Hence %due to £BLABLA(voir Vi2 Th 5.30)§
the measures $\pi$ and $\pi'$ coincide on the set $\R^2\setminus (A\times B)=[(\R\setminus A)\times \R] \cup [\R\times (\R\setminus B)]$ that we denote by $E$, i.e
\[\pi(\cdot\cap E)=\pi'(\cdot\cap E).\]
Therefore we aim at proving that $\pi$ and $\pi'$ coincide on the countable set $E^c=A\times B$. We will in fact prove $\pi\{(a,b)\}=\pi'\{(a,b)\}$ for every $(a,b)\in \Gamma$. Let $(a,b)$ be in $\Gamma$. Let us assume without loss of generality $a<b$. We further assume $F_\sigma(a^-)\geq F_\sigma(b)$ so that according to the definition of $\Gamma$ the transport plans $\pi$ and $\pi'$ are concentrated on it and the mass of $\mu$ on $[a,b[$ must be transported onto $]a,b]$. This writes $\mu([a,b[)=\pi([a,b[\times ]a,b])$ and we have also $\mu([a,b[)=\pi'([a,b[\times ]a,b])$. Reasoning similarly we obtain $\nu(]a,b[)=\pi([a,b[\times ]a,b[)$. Therefore
\begin{align}\label{eq:ahah}
\pi([a,b[\times\{b\})=\mu([a,b[)-\nu(]a,b[).
\end{align}
The construction of $\Gamma$ associates the route $(a,b)\in \Gamma$ with $a<b$ to some level $h>0$. The generalized intermediate value theorem, Lemma \ref{lem:tvi} permits us to derive $F_\sigma(a')>h$ for every $a'\in ]a,b[$ so that it also holds $F_\sigma(a'^-)\geq F_\sigma(b)$. Therefore \eqref{eq:ahah} can be written for any $a'$ such that $(a',b)\in \Gamma$ in place of $a$. Recall that it also holds for $\pi'$ in place of $\pi$. We will be done if we can prove $\pi(]a,b[\times \{b\})=\pi'(]a,b[\times \{b\})$. This is in fact correct because
\begin{align*}
\pi(]a,b[\times \{b\})&=\pi((]a,b[\times \{b\})\cap \Gamma)\\
&=\sup\{\pi(([a',b[\times \{b\})\cap \Gamma)):\,a'\in \R, (a',b)\in\Gamma,\,a<a'<b\}\\
&=\sup\{\pi'(([a',b[\times \{b\})\cap \Gamma)):\,a'\in \R, (a',b)\in\Gamma,\,a<a'<b\}\\
&=\pi'((]a,b[\times \{b\})\cap \Gamma)=\pi'(]a,b[\times \{b\}).
\end{align*}
\end{proof}

\paragraph{Proof of $\emph{3}\Rightarrow \emph{4}$ for general measures $\mu$ in $\nu$} \label{par:general}
We no longer assume $\mu\perp \nu$. In this case we know from Lemma \pageref{lem:coincide} that any $\gamma$ satisfying $\emph{3}$ in the Main Theorem can be written in the form $(\id\times\id)_\#(\mu\wedge \nu)+\gamma_0$ where $\gamma_0\in \ma(\mu-(\mu\wedge \nu), \nu-(\mu\wedge \nu))$ satisfies $\gamma_0(\Delta)=0$. We are in the situation of the last paragraph because $\mu-(\mu\wedge \nu)\perp \nu-(\mu\wedge \nu)$ and $\gamma_0$ satisfies $\emph{3}$: since we know that there is a monotone set $S$ with $\gamma(\R^2\setminus S)=0$ it also holds $\gamma_0(\R^2\setminus S)=0$. From the discussion above we obtain that $\gamma_0$ is the excursion coupling of $\mu-(\mu\wedge \nu)$ and $\nu-(\mu\wedge \nu)$. This exactly implies that $\gamma$ is the excursion coupling of $\mu$ and $\nu$.

\section{Final elements of proof of the Main Theorem and its corollary}\label{sec:proof}

\begin{proof}[Proof of the Main Theorem]
The structure of the proof is the following:
\begin{itemize}
\item The set of measures satisfying $\emph{1}$ (the solutions to the $L^{1^-}$ problem) is not empty.
\item The set of measures satisfying $\emph{2}$ (the solutions to the $L^{1,q}$ problem) is not empty.
\item Assumption $\emph{1}$ implies $\emph{3}$ and assumption $\emph{2}$ implies $\emph{3}$ (see Section \ref{sec:1to3}).
\item Assumption $\emph{3}$ implies $\emph{4}$ (see Section \ref{sec:3to4}).
\item There is a unique and well-defined coupling satisfying $\emph{4}$ (see Theorem \ref{thm:construction}).
\end{itemize}
Therefore, if $\pi$ satisfies $\emph{4}$ it equals any coupling satisfying $\emph{1}$, respectively $\emph{2}$. As these sets are not empty, if $\pi$ satisfies $\emph{4}$ it also satisfies $\emph{1}$, respectively $\emph{2}$.

We proved everything except the two first existence statements. They will be obtained as consequences of Lemma \ref{lem:continuous} that is proved in this section. 
\end{proof}

Il order to prove that there exists a solution to the $L^{1^-}$ limit transport problem (property $\emph{1}$) it suffices to remind of two elementary facts. First, any sequence $(\pi_n)_{n\in \mathbb{N}}$ in $\ma(\mu,\nu)$ admits cluster points. The set $\ma(\mu,\nu)$ is indeed a compact set for the weak topology, as a simple consequence of Prokhorov Theorem and of the fact that it is closed. Second, as we recalled in the introduction, the set $\ma_p^*(\mu,\nu)$ is not empty for every $p\in ]0,1[$. These two elements permit us to conclude that there exists at least one element $\pi\in \ma(\mu,\nu)$ that satisfies $\emph{1}$. 
However, the non emptiness of $\ma_p^*(\mu,\nu)$ relied on the following Lemma \ref{lem:continuous} through the standard argument of optimization. We prove it now for the sake of completeness and in order to prepare the proof of Lemma \ref{lem:continuous2}.

\begin{lem}\label{lem:continuous}
Let $p,\,\mu$ and $\nu$ be as in the statement. Let $p$ be in $]0,1[$ and $\mu,\,\nu$ be two mesures with finite moment of order $p$. The function 
\[T_p:\pi\in \ma(\mu,\nu)\to \iint|y-x|^p\,\d\pi(x,y)\]
is continuous on $\ma(\mu,\nu)$, endowed with the weak topology of laws on $\R^2$. 
\end{lem}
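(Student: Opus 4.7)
The plan is to use truncation plus a uniform tail estimate coming from the fact that the marginals of every $\pi \in \ma(\mu,\nu)$ are fixed. Weak convergence on $\p(\R^2)$ is metrizable, so it suffices to check sequential continuity: fix $\pi_n \to \pi$ weakly in $\ma(\mu,\nu)$ and prove $T_p(\pi_n) \to T_p(\pi)$.

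The key pointwise bound, available precisely because $p \in \,]0,1[$, is the subadditivity of $t \mapsto t^p$ on $\R^+$, which gives
\[
0 \leq |y-x|^p \leq (|x|+|y|)^p \leq |x|^p + |y|^p =: g(x,y).
\]
The function $g$ is continuous, and the crucial feature is that its integral against \emph{any} $\pi \in \ma(\mu,\nu)$ depends only on the marginals:
\[
\int g\,\d\pi = \int |x|^p\,\d\mu(x) + \int |y|^p\,\d\nu(y) < +\infty,
\]
by the finite-$p$-moment assumption. So the dominating function has the same (finite) mass under every competitor, which will allow me to transfer tail control from $\pi$ to $\pi_n$ uniformly.

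I would then introduce a continuous cutoff $\psi_R\colon \R^2 \to [0,1]$ with $\psi_R = 1$ on $[-R,R]^2$ and $\psi_R = 0$ outside $[-R-1,R+1]^2$, and split
\[
T_p(\pi_n) - T_p(\pi) = \int f\psi_R\,\d\pi_n - \int f\psi_R\,\d\pi + \int f(1-\psi_R)\,\d\pi_n - \int f(1-\psi_R)\,\d\pi,
\]
where $f(x,y) = |y-x|^p$. The first difference goes to $0$ as $n \to \infty$ for each fixed $R$ because $f\psi_R$ is bounded continuous. For the tail terms, I bound $0 \leq f(1-\psi_R) \leq g(1-\psi_R)$. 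Applying weak convergence to the bounded continuous function $g\psi_R$ together with the identity $\int g\,\d\pi_n = \int g\,\d\pi$ yields
\[
\int g(1-\psi_R)\,\d\pi_n \;\longrightarrow\; \int g(1-\psi_R)\,\d\pi \quad \text{as } n\to\infty,
\]
and by monotone convergence the right-hand side tends to $0$ as $R\to\infty$. Given $\eps>0$, I first pick $R$ so large that $\int g(1-\psi_R)\,\d\pi < \eps$, then pick $N$ so large that for $n \geq N$ both $\bigl|\int g\psi_R\,\d(\pi_n-\pi)\bigr| < \eps$ and $\bigl|\int f\psi_R\,\d(\pi_n-\pi)\bigr| < \eps$. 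Combining yields $|T_p(\pi_n) - T_p(\pi)| \leq 4\eps$, which is the conclusion.

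I do not expect any real obstacle: the whole argument is standard dominated-convergence-for-weak-convergence, and the only ingredient specific to the setting is the subadditivity of $t\mapsto t^p$ that provides a linear (in the marginals) dominating function. The one point that deserves explicit mention is that subadditivity fails for $p>1$, so this precise argument is confined to $p \in \,]0,1]$; fortunately that is exactly the range in the statement. The same scheme will also serve as the template for Lemma \ref{lem:continuous2} announced immediately after.
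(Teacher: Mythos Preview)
Your proof is correct and follows essentially the same strategy as the paper: truncate, then control the tail uniformly over $\ma(\mu,\nu)$ via the subadditivity bound $|y-x|^p\le |x|^p+|y|^p$, whose integral depends only on the fixed marginals. The only cosmetic difference is that the paper truncates in value, writing $|y-x|^p=(|y-x|^p\wedge n)+(|y-x|^p-n)^+$ and bounding the remainder directly by $|x|^p\mathds{1}_{|x|^p\ge n/2}+|y|^p\mathds{1}_{|y|^p\ge n/2}$, whereas you truncate spatially with $\psi_R$ and recover the tail control from the identity $\int g\,\d\pi_n=\int g\,\d\pi$; both routes yield the same $4\eps$ estimate.
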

\begin{proof}
For every $n\in\N^*$ we decompose $(x,y)\to|y-x|^p$ as the sum of a bounded continuous function and a reminder function:
\[|y-x|^p=\underbrace{(|y-x|^p\wedge n)}_{:=f_{n,p}(x,y)}+\underbrace{(|y-x|^p-n)^+}_{:=h_{n,p}(x,y)}\]
where $(u)^+=\frac12(u+|u|)$ denotes the positive part of $u$. Since $n\geq 1$ we have
\[h_n(x,y))\leq (|x|^p-n/2)^+ +(|y|^p-n/2)^+\leq |x|^p\mathds{1}_{|x|\geq n/2}+|y|^p\mathds{1}_{|y|\geq n/2} \]
so that $\iint  h_n(x,y) \d \pi(x,y)\leq \int_{|x|^p\geq n/2}|x|^p\d\mu(x)+ \int_{|y|^p\geq n/2}|y|^p\d\nu(y)$ for every $\pi\in \ma(\mu,\nu)$. Thus $s_{n,p}:=\sup\{\iint  h_n \d \pi:\,\pi\in \ma(\mu,\nu)\}\to_{n\to \infty} 0$. Let us finish proving that $T_p$ is continuous at $\pi$. Let $\eps$ be a positive real number and $n$ large enough to make $s_{n,p}$ smaller that $\eps/4$. Now
\begin{align*}
\left|F_p(\pi)-F_p(\pi')\right| &\leq \left| \iint f_{n,p}\, \d (\pi-\pi')\right|+ \left|\iint h_{n,p}\, \d (\pi-\pi')\right|\\
&\leq \left|\iint f_{n,p}\, \d (\pi-\pi')\right|+2s_{n,p}.
\end{align*}
As $f_n$ is continuous and bounded this estimate proves that there exists a neighborhood $U$ of $\pi$ such that for every $\pi'\in U$ it holds $|T_p(\pi')-T(\pi)|\leq \eps$.
\end{proof}

\begin{rem}
Until now, even though it is clear from the context and a consequence of the Main Theorem (implication $\emph{2}\Rightarrow \emph{1}$) we never proved that a so-called solution to the $L^{1^-}$ limit transport problem is a solution to the $L^1$ transport problem. The next result may be invoked to prove it directly. 
%\begin{align*}
%||y'-x'|^p-|y-x|^p|&\leq |(y'-x')-(y-x)|^p\leq |y'-y|^p+|x'-x|^p\\
%&\leq (1+|y'-y|)+(1+|x'-x|)
%\end{align*}

\begin{lem}\label{lem:continuous2}
The function $(p,\pi)\in]0,1]\times\ma(\mu,\nu)\to F_p(\pi)$ is continuous.
\end{lem}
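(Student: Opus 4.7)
The strategy is to imitate the proof of Lemma \ref{lem:continuous}, keeping the same truncation decomposition but gaining uniformity in the parameter $p \in {]0,1]}$. Concretely, for $n \in \N^*$ write
\[
|y-x|^p = \underbrace{\bigl(|y-x|^p \wedge n\bigr)}_{=:f_{n,p}(x,y)} + \underbrace{\bigl(|y-x|^p - n\bigr)^+}_{=:h_{n,p}(x,y)}.
\]
The function $f_{n,p}$ is bounded by $n$ and jointly continuous in $(x,y,p)$, and the new tail term $h_{n,p}$ must be controlled uniformly in $p$.

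The crucial new step is to show $s_n := \sup\{\iint h_{n,p}\,\d\pi : p \in {]0,1]},\ \pi \in \ma(\mu,\nu)\} \to 0$ as $n \to \infty$. Since $p \leq 1$, subadditivity yields $|y-x|^p \leq |x|^p + |y|^p$, so $h_{n,p}(x,y)$ vanishes outside $\{|x|^p \geq n/2\} \cup \{|y|^p \geq n/2\}$, and there it is bounded by $|x|^p + |y|^p$. For $n \geq 2$, the condition $|x|^p \geq n/2 \geq 1$ forces $|x| \geq 1$, hence $|x|^p \leq |x|$, and moreover $|x| \geq (n/2)^{1/p} \geq n/2$. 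The same holds for $y$. Therefore
\[
\iint h_{n,p}\,\d\pi \leq \int_{|x|\geq n/2}|x|\,\d\mu(x) + \int_{|y|\geq n/2}|y|\,\d\nu(y),
\]
and the right-hand side tends to $0$ as $n \to \infty$ since $\mu,\nu \in \p_1(\R)$. This bound is uniform in $p \in {]0,1]}$ and in $\pi \in \ma(\mu,\nu)$.

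Next I would verify that for each fixed $n$, the map $(p,\pi)\mapsto \iint f_{n,p}\,\d\pi$ is continuous on $]0,1]\times\ma(\mu,\nu)$. Given $(p_k,\pi_k) \to (p,\pi)$, tightness of $\{\pi_k\}$ (Prokhorov) provides a compact $K \subset \R^2$ with $\sup_k \pi_k(K^c) + \pi(K^c) < \eta$; on $K$, $f_{n,p_k} \to f_{n,p}$ uniformly because $(x,y,p)\mapsto |y-x|^p\wedge n$ is uniformly continuous on $K\times [p-\delta,p+\delta]$; off $K$ the two integrals are controlled by $n\eta$. Hence
\[
\Bigl|\iint f_{n,p_k}\d\pi_k - \iint f_{n,p}\d\pi\Bigr| \leq \|f_{n,p_k}-f_{n,p}\|_{\infty,K} + 2n\eta + \Bigl|\iint f_{n,p}\d\pi_k - \iint f_{n,p}\d\pi\Bigr|,
\]
and the last term tends to $0$ by weak convergence since $f_{n,p}$ is bounded and continuous.

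To conclude, given $\eps>0$ pick $n$ with $s_n \leq \eps/4$; then for any $(p',\pi')$ near $(p,\pi)$,
\[
|T_{p'}(\pi') - T_p(\pi)| \leq \Bigl|\iint f_{n,p'}\d\pi' - \iint f_{n,p}\d\pi\Bigr| + 2s_n,
\]
which is $\leq \eps$ by the previous step. The main obstacle is really step two, the uniform tail bound: the previous lemma was stated for a single fixed $p$ so this uniformity in $p \in {]0,1]}$ is what must be newly extracted from the hypothesis $\mu,\nu \in \p_1(\R)$.
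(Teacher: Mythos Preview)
Your argument is correct and follows the same truncation strategy as the paper. The paper organises the estimate slightly differently: it decouples the two variables via
\[
|T_p(\pi)-T_{p'}(\pi')|\leq|T_p(\pi)-T_{p'}(\pi)|+|T_{p'}(\pi)-T_{p'}(\pi')|,
\]
handling the first term by dominated convergence (with $\pi$ fixed) and the second by Lemma~\ref{lem:continuous} together with the uniform tail bound $s_{n,1}$ and a single bounded continuous test function independent of $p'$. Your route---proving joint continuity of $(p,\pi)\mapsto\iint f_{n,p}\,\d\pi$ via tightness and uniform convergence of $f_{n,p_k}\to f_{n,p}$ on compacts---is a legitimate and arguably cleaner alternative.

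One small imprecision in your tail estimate: from ``$h_{n,p}$ vanishes outside $\{|x|^p\geq n/2\}\cup\{|y|^p\geq n/2\}$ and there is bounded by $|x|^p+|y|^p$'' the displayed bound $\int_{|x|\geq n/2}|x|\,\d\mu+\int_{|y|\geq n/2}|y|\,\d\nu$ does not literally follow, because on the set $\{|x|^p\geq n/2\}$ the term $|y|^p$ is not controlled. Use instead the pointwise inequality
\[
h_{n,p}(x,y)\leq(|x|^p-n/2)^+ +(|y|^p-n/2)^+
\]
already used in Lemma~\ref{lem:continuous}, which separates the variables; your observations $|x|^p\leq|x|$ and $|x|\geq n/2$ on $\{|x|^p\geq n/2\}$ then give exactly the bound you wrote. (Even more directly: $h_{n,p}\leq h_{n,1}$ for every $p\in{]0,1]}$ and $n\geq 1$, so $s_n\leq s_{n,1}$.) With this correction the rest of your plan goes through.
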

\begin{proof}
Let us fix $(p,\pi)$. For another pair $(p',\pi')$ we have
\begin{align*}
|F_p(\pi)-F_{p'}(\pi')|&\leq |F_p(\pi)-F_{p'}(\pi)|+|F_{p'}(\pi)-F_{p'}(\pi')|\\
&\leq 2s_{n,1}+|F_p(\pi)-F_{p'}(\pi)|+\left|\iint [(1+|y-x|)\wedge n]\, \d (\pi-\pi')\right|.
\end{align*}
Let $\eps>0$. Due to the dominated convergence theorem the term $|F_p(\pi)-F_{p'}(\pi)|$ is smaller than $\eps/4$ for $p'\in ]0,1]$ in a neighborhood of $p$. For $n$ large enough $s_{n,p}$ is smaller that $\eps/4$. For this $n$ fixed we see that the last term is smaller than $\eps/4$ when $\pi'$ is in a certain neighborhood of $\pi$.% The  the first term tends to zero (we have $|y-x|^p\leq 1+|y-x|$ for every $p\leq 1$). Let $U$ be a neighborhood of $\pi$ such that $|F_p(\pi)-F_{p'}(\pi)|\leq \eps/2$ on $U$. The second term is bounded from above by $\left|\iint (1+|y-x|)\wedge n\, \d (\pi-\pi')\right|+2s_{n,1}$
\end{proof}
\end{rem}

Let us now prove that the set of solutions to the $L^{1,q}$ secondary transport problem is not empty. With what precedes including Lemma \ref{lem:continuous} we indeed know that $\ma_1^*(\mu,\nu)$ is not empty and that it is closed. Let $(\pi_n)_{n\in \N}$ be a minimizing sequence for $\pi\mapsto \iint |y-x|^q\d \pi(x,y)$ on $\ma_1^*(\mu,\nu)$. This is a continuous function so that $\ma^{**}_{1,q}$ is not empty. %For $p\leq 1$, as $d(x,y)=|y-x|^p$ is a distance

We finish the section with the proof that seemingly stronger results are equivalent to $\emph{1}$ and $\emph{2}$ in the Main Theorem.

\begin{proof}[Proof of Corollary \ref{cor:main_cor}]
%Let $(p_n)_{n\in \N}$ is a sequence converging to $1$ with $p_n<1$ and $(\pi_n)_{n\in \N}$ a sequence with $\pi_n\in \ma_{p_n}(\mu,\nu)$. Since $\ma(\mu,\nu)$ is a compact set, there exists a subsequence that converges and we can further take a subsequence $(\pi_{\varphi(n)})_{n\in \N}$ of it such that the corresponding exponent sequence $(p_{\varphi(n)})_{n\in \N}$ is increasing. Therefore the excursion coupling $\pi$ is a cluster value. If $(p_n)_{n\in \N}$ is not converging to $\pi$ it posses a subsequence that converges to a different coupling $\pi'$. However, a subsequence of it corresponding to increasing exponents converges to $\pi$, a contradiction with $\pi'\neq \pi$.
%£*NJ* Ou faire cela d'abord pour les $p_n$ croissant seulement? *NJ*§

The Main Theorem $\emph{4}\Rightarrow \emph{1}$ applies to any subsequence $(p_{\varphi(n)})_{n\in \N}$ that is increasing. Therefore, in the compact set $\ma(\mu,\nu)$ any subsequence of $(\pi_n)_{n\in \N}$ possesses an increasing subsequence converging to $\pi$. This proves $\emph{1'}$.

Let $q'$ be an exponent smaller than $1$. Then the Main Theorem $\emph{4}\Rightarrow \emph{2}$ applies for $q'=q$.
\end{proof}

\section{Concluding bibliographic remarks and perspectives}\label{sec:remarks}

\begin{rem}[the $L^0$ transport problem]\label{rem:zero}
For $p=0$ the cost function is $d^0(x,y)=\mathds{1}_{\{x\neq y\}}$ where $d$ is the distance on $\R$. This corresponds to the coupling problem that defines the total variation of $\mu$ and $\nu$. A coupling $\pi$ is a solution if and only if it writes $(\id\times \id)_\#(\mu\wedge \nu)+\pi_0$ and $\min_{\pi\in \ma(\mu,\nu)} T_0(\pi)=\max|\mu(A)-\nu(A)|=1/2|\mu-\nu|(\R)$.
\end{rem}

\begin{rem}[the $L^p$ transport problem for $p<0$]\label{rem:coulomb}
For $p<0$ the cost function $d^p$ where $d$ is the distance on $\R$ is singular on the diagonal $\Delta=\{(x,y)\in\R^2:\,x=y\}$ where it takes the value $+\infty$. Notice that it writes $\ell(|y-x|)$ where $\ell:[0,\infty)\to \R^+\cup\{\infty\}$ is decreasing, convex and has limits $\infty$ and $0$. This type of cost, including the Coulomb cost $c:(x,y)\mapsto |y-x|^{-1}$ has been thoroughly studied by Cotar, Friesecke and Kl\"uppelberg in \cite{CFK} with the purpose of determining the joint distribution of electronic particles on their orbitals. Their Theorem 3.1 states an existence and uniqueness result for measures admitting a density. In \S4.1 they conduct a precise study of the one dimensional case in the spirit of \cite{GMcC, McC}, the same geometric spirit that is also inspiring us in the present paper. Concerning the Coulomb type costs, notice that the assumption $\mu=\nu$ is the natural one for the chemical application. In their Theorem 4.8 the authors completely characterize the optimal transport for an absolutely continuous measures $\mu=\nu$ with positive density. As for $p>0$ this solution does not depend on the particular value of $p<0$ (or of $l$). 

However, there is no uniqueness of the optimal transport plan in general, as can be seen for instance with the example $\mu=\nu=\frac13(\delta_{-1}+\delta_{0}+\delta_{1})$. Moreover the example $\mu=(1/2)(\delta_0+\delta_{1+\Phi})$, $\nu=(1/2)(\delta_{-\Phi}+\delta_{1})$ with $\Phi=(1+\sqrt{5})/2$ is similar to Example \ref{ex:main}: For $p<-1$ the problem admits a unique solution $\pi^0$, for $p>-1$ another transport plan $\pi^1$ is the unique solution and for $p=1$ the solutions are the plans $(\pi^\lambda)_{\lambda\in [0,1]}$ defined by $\pi^\lambda=\lambda\pi^1+(1-\lambda)\pi^0$.
\end{rem}

\begin{rem}[On the solution of the Monge problem selected in \cite{DiML}]\label{rem:DiML}
It is clear that if there exists $a\in \R$ such that $\mu$ and $\nu$ are concentrated in $[a,+\infty[$ and $]-\infty,a]$ respectively, then $\ma^*_1(\mu,\nu)$ coincide with the set of all transport plans between $\mu$ and $\nu$. In fact for measures with finite first moment $\ma^*_1(\mu,\nu)=\ma(\mu,\nu)$ if and only if there exist such a real $a\in \R$ splitting the supports of $\mu$ and $\nu$ (but the symmetric situation $\spt(\mu)\in [a,+\infty[$ is possible). For general measures $\mu,\nu\in \p_1(\R)$ the set $\ma^*_1(\mu,\nu)$ has recently be described by Di Marino and Louet in \cite{DiML}. This recent paper concerns another way to select a special element of $\ma^*_1(\mu,\nu)$ when the entropy parameter in the entropic regularized Monge problem tends to zero. Note that the resulting coupling is different from ours. If the measure $\mu,\,\nu$ are as in the beginning of this remark, the plan of Di Marino and Louet is $\mu\times \nu$. We obtain the decreasing rearrangement, i.e the law of $(F_\mu,1-F_\nu)$ seen as a random vector on $([0,1],\lambda)$.
\end{rem}

\begin{rem}[On the Skorkhod problem for unbiased Brownian motions]%\cite{LTT}
One motivation to our paper was to better understand a work by Last, M\"orters and Thorisson \cite{LMT} and reformulate their construction in the framework of the optimal transport theory. In this paper eternal Brownian motions $(B_t)_{t\in \R}$ starting in $\mu$ are embedded onto $\nu$ with non-negative random time $T$ such that $(B_{t-T})_{t \in \R}$ is an eternal Brownian motion independent of $T$. The authors define a coupling similar to our excursion coupling but for two $\sigma$-finite random measures on $\R$. This solution minimizes $\mathbb{E}(\varphi(T))$ for any concave function $\varphi:\R^+\to\R^+$. Comparing with our case that concerns deterministic measure $\mu,\,\nu\in \p_1(\R)$ one can conjecture that among couplings $(X,Y)$ with law in $\ma(\mu,\nu)$ that satisfy the constraint $X\leq Y$, the excursion coupling is the one minimizing $\mathbb{E}(Y-X)^p$ for every $p\in]0,1[$.
\end{rem}

\begin{rem}\label{rem:BY}
Bertoin and Yor \cite{BY} established their deterministic formulae in relation with an important chapter of Stochastic Calculus. The occupation measure of (the continuous part of) a real semimartingale turns out to be a random absolutely countinuous measure with density expressed in terms of local times, that are quantities described by the Meyer--Tanaka formula. The work of Bertoin and Yor provides analogue results in the deterministic word of functions with finite variation.
\end{rem}

\begin{rem}[Sharpness of the assumptions] The assumptions in the Main Theorem are by no mean claimed to be sharp. For property \emph{\ref{un}}, a rough analysis of the proof seems to indicate that the family of costs $(c_\eps)_{\eps\in ]0,1[}$ defined by $c_\eps(x,y)=|y-x|^{1-\eps}$ can be replaced by any family $(c'_\eps)_{\eps\in ]0,1[}$ of type $c'_\eps(x,y)=\ell_\eps(|y-x|)$ where it is assumed $\ell_\eps(d)\to_{\eps\to 0^+} d$ for every $d\geq 0$ and $\ell_\eps$ is increasing and strictly concave. Concerning property \emph{\ref{deux}}, any $c'$ of the same type as before should play the same role as $|y-x|^q$. Finally, the fact that $\mu$ and $\nu$ have a finite first moment should not be necessary to state the equivalence between \emph{\ref{trois}} and \emph{\ref{quatre}} (see also Remark \ref{rem:3-4}). 
\end{rem}

\begin{rem}[$L^{1-}$ limit transport problem in Euclidean spaces]\label{rem:geod}

The transport problem for the Monge distance cost in Euclidean and further in some more general geodesic spaces is a research stream with a rich history. It recently culminated with the optimal transport proof by Cavalletti and Mondino of the L\'evy-Gromov inequality \cite{CM}. It is intimately connected to the Monge problem on the real line because under appropriate assumptions on the space and the marginal measures, any optimal transport plan can be disintegrated as a mixture of one dimensional transport plans concentrated on disjoint geodesic rays.

A natural question with respect to the present paper is the existence and uniqueness of a solution to the $L^{1^-}$ limit problem in Euclidean spaces. One can moreover conjecture that such a cluster solution can be disintegrated in a way that the transport along the geodesic rays always is the excursion coupling. A similar result has been proved in \cite{AP} for the $L^{1^+}$ limit problem and the quantile coupling.
%In the case of geodesic spaces the case $p=1$ make appear a decomposition of the space in geodesic rays (Bianchini-Daneri), each coming with a one dimensional problem, isometric to the one of the present paper. We will indeed add a surprisingly basic result to what is known on the extremely studied case of real line $\X=\R$. 
\end{rem}

\begin{rem}[Generating $\pi$ by picking a random point on a tree]\label{rem:tree}
A popular construction in probability is to associate a random tree with a random function (or process) $f$. Typically in the construction of the continuum random Brownian tree \cite{Ald3} a random tree is associated to a random excursion. However this topological construction is purely deterministic. Let $f$ be function defined on an interval $I$. We write $x\sim x'$ if and only if $f(x)=f(x')$ and $y\in ]x,x'[\Rightarrow f(y)\geq f(x)$. The tree is the quotient space $I/\sim$. The fact that we conducted a similar operation with our function $F_\mu-F_\nu$ yields an appealing interpretation. In place of choosing a random point $h$ on the $y$-axis with density $i^*$ and continue selecting uniformly an excursion among $i^*(h)\in \N$ possible we could directly choose randomly a point on the associated tree according to the length measure, i.e the Hausdorff measure of dimension $1$. Doing this we come closer to the classical simulation of the quantile coupling where a point $h$ is chosen on the tree $[0,1]$ (a segment) according to the length measure (the Lebesgue measure). While it is clear that the measure on the tree is the correct one if $\mu$ and $\nu$ are simple, e.g finite sums of atoms, or such that $F=F_\mu-F_\nu$ is of class $\mathcal{C}^1$ with finitely many changes of monotonicity,  it is not is the general case. We leave it as a conjecture that the quotient measure on the tree corresponding to our generalized function $F^*=(F_\mu-F_\nu)^*$ always is the length measure.
\end{rem}

%£
%\begin{itemize}
%\item Dans Villani1, les pages 100--102 parlent du probleme de Monge d'origine et de l'approche $L^{1+}$ pour avoir une unique solution.
%\end{itemize}
%§

%£\begin{rem}
%The transport problem $\pi\mapsto \int c(x,y)\,\d \pi(x,y)$ will give the same swapping lemma as primary problem if $y\in \R\mapsto c(x',y)-c(x,y)$ is increaing for every ordered pair $x<x'$. The secondary transport problem with cost $c$ after the primary $L^1$ problem BLABLA.
%\end{rem}§

\bibliographystyle{alphaabbr}%plain
\bibliography{basebib_mountain}
\end{document}